\newtheorem{theorem}{Theorem}[section]
\newtheorem*{theorem*}{Theorem}
\newtheorem{lemma}[theorem]{Lemma}
\newtheorem*{lemma*}{Lemma}
\theoremstyle{definition}
\newtheorem*{definition*}{Definition}
\newtheorem*{proposition*}{Proposition}
\newtheorem*{example*}{Example}
\newtheorem*{observation*}{Observation}
\newtheorem*{claim*}{Claim}
\newtheorem{remark}[theorem]{Remark}
\newtheorem*{remark*}{Remark}
\numberwithin{equation}{section}
\newcommand{\bN}{\mathbb{N}}
\newcommand{\bR}{\mathbb{R}}
\newcommand{\cC}{\mathcal{C}}
\newcommand{\cN}{\mathcal{N}}
\newcommand{\cO}{\mathcal{O}}
\newcommand{\mJ}{\mathsf{J}}
\newcommand{\mK}{\mathsf{K}}
\newcommand{\mL}{\mathsf{L}}
\newcommand{\E}{\operatorname{\mathbb{E}}}
\newcommand{\Prob}{\operatorname{\mathbb{P}}}
\newcommand{\Var}{\operatorname{Var}}
\newcommand{\parti}{n}
\DeclareRobustCommand{\stirlingii}{\genfrac{\{}{\}}{0pt}{}}
\title{Central limit theorem for peaks of a random permutation in a fixed conjugacy class of $S_n$}
\let\sparetitle\@title
\author[1]{Jason Fulman\thanks{fulman@usc.edu}}
\author[1]{Gene B. Kim\thanks{genebkim@usc.edu}}
\author[2]{Sangchul Lee\thanks{sos440@math.ucla.edu}}
\affil[1]{University of Southern California}
\affil[2]{University of California, Los Angeles}
\begin{document}

\maketitle

\begin{abstract}
    The number of peaks of a random permutation is known to be asymptotically normal. We give a new proof of this and prove a central limit theorem for the distribution of peaks in a fixed conjugacy class of the symmetric group. Our technique is to apply ``analytic combinatorics'' to study a complicated but exact generating function for peaks in a given conjugacy class.
\end{abstract}

\section{Introduction}

We say that a permutation on $n$ symbols has a descent at position $i$ if $\pi(i) > \pi(i+1)$, and we let $d(\pi)$ denote the number of descents of $\pi$. For example. the permutation $1\underline{4}32\underline{6}5$ has descents at positions $2$ and $5$, and has $d(\pi)=2$. Descents appear in numerous parts of mathematics. For examples, see Knuth \cite{Knuth} for connections of descents with the theory of sorting and the theory of runs in permutations and see Bayer and Diaconis \cite{BD} for applications of descents to card shuffling. The number $A(n,k)$ of permutations on $n$ symbols with $k$ descents is called an Eulerian number, and there is an entire book devoted to their study \cite{Pet}.

It is well known that the distribution of descents is asymptotically normal with mean $(n-1)/2$ and variance $(n+1)/12$. There are many proofs of this:
\begin{enumerate}
    \item Pitman \cite{Pit} uses real-rootedness of the Eulerian polynomials
    \begin{align*}
        A_n(t) = \sum_{\pi \in S_n} t^{d(\pi)+1}
    \end{align*}
    
    \item David and Barton \cite{DB} use the method of moments.
    
    \item Tanny \cite{Tan} uses the fact that if $U_1,\cdots,U_n$ are independent uniform $[0,1]$ random variables, the for all integers $k$,
    \begin{align*}
         \Prob\left(k \leq \sum_{i=1}^n U_i < k+1 \right) = A(n,k)/n!
    \end{align*}
    
    \item Fulman \cite{Fstein} uses Stein's method.
\end{enumerate}

There is also interesting literature on the joint distribution of descents and cycles. Gessel and Reutenauer \cite{GR} use symmetric function theory to enumerate permutations with a given cycle structure and descent set, and Diaconis, McGrath, and Pitman \cite{DMP} interpret this in the context of card shuffling. We regard these exact results as a miracle, and they enable one to write down an exact (but quite complicated) generating function for descents of permutations in a given conjugacy class. These exact generating functions make it possible to prove central limit theorems for the number of descents in fixed conjugacy classes of the symmetric group. Fulman \cite{FulCLT} proved a central limit theorem when the conjugacy classes consist of large cycles. Almost twenty years later, Kim \cite{Kim1} proved a central limit for descents in random fixed point free involutions. Quite recently, Kim and Lee \cite{KimLee} proved a central limit theorem for arbitrary conjugacy classes. These
results would be very difficult to obtain without exact generating functions.

Given the above discussion, it is natural to ask if there are other permutation statistics for which there is exact information about the joint distribution with cycle structure. In their work on casino shuffling machines, Diaconis, Fulman, and Holmes \cite{DFH} discovered that there is a lovely exact generating function for the number of peaks of a permutation enumerated according to cycle structure. Let us describe their result. We say that a permutation $\pi \in S_n$ has a peak at position $1<i<n-1$ if $\pi(i-1)< \pi(i) > \pi(i+1)$, and let $p(\pi)$ be the number of peaks of $\pi$. Thus $\pi = 1 \underline{4} 2 6 \underline{7} 5 3$ has peaks at positions 2 and $5$, so that $p(\pi)=2$. Letting $\lambda$ be a partition of $n$ with $\parti_i$ parts of size $i$, Corollary 3.8 of \cite{DFH} gives that
\begin{align}
    \label{eq:peakgen_cycletype}
    \sum_{\pi \in \cC_{\lambda}}
    \left( \frac{4t}{(1+t)^2} \right)^{p(\pi)+1}
    = 2 \left( \frac{1-t}{1+t} \right)^{n+1} \sum_{a \geq 1} t^a \prod_i
    [x_i^{\parti_i}] \left( \frac{1+x_i}{1-x_i} \right)^{f_{a,i}}.
\end{align}
Here, $\cC_{\lambda}$ denotes the elements of $S_n$ of cycle type $\lambda$, and $[x_i^{\parti_i}] g(x_i)$ denotes the coefficient of $x_i^{\parti_i}$ in the function $g(x_i)$, and
\begin{align*}
    f_{a,i} = \frac{1}{2i} \sum_{\substack{d \mid i \\ d \text{ odd}}} \mu(d) (2a)^{i/d},
\end{align*}
where $\mu$ is the M\"obius function of elementary number theory. (The result of \cite{DFH} actually deals with valleys rather than peaks, but the joint generating function with cycle structure is the same as can be seen by conjugating by the longest permutation $n \cdots 21$). The reader will agree that the generating function (\ref{eq:peakgen_cycletype}) looks hard to deal with (it need not be real-rooted), and our main insight is that we can adapt the methods of Kim and Lee \cite{KimLee} to analyze it.

To close the introduction, we mention that the number of peaks of a permutation is a feature of interest. The paper \cite{DFH} uses peaks to analyze casino shelf-shuffling machines. The number of peaks is classically used as a test of randomness for time series; see Warren and Seneta \cite{WS} and their references, which also include a central limit theorem for the number of peaks for a uniform random permutation. Permutations with no peaks are called unimodal (usually unimodal refers to no valleys but these are equivalent for our purposes), and are of interest in social choice theory through Coombs's ``unfolding hypothesis'' (see Chapter 6 of \cite{Dibook}). They also appear in dynamical systems and magic tricks (see Chapter 5 of \cite{DG}).

Finally, we note that peaks have been widely studied by combinatorialists; see Petersen \cite{Petpeak}, Stembridge \cite{Stempeak}, Nyman \cite{Nypeak}, Schocker \cite{Sch} and a paper of Billey, Burdzy, and
Sagan \cite{BBS}, for a small sample of combinatorial work on peaks.

\subsection{Main results}

To motivate the readers, we first demonstrate a numerical simulation result. Figure 1 is a histogram of peaks of $10^5$ permutations drawn from the conjugacy class $\cC_{2^{250} 4^{125}} \subset S_{1000}$.
\begin{center}
    \begin{minipage}{.7\linewidth}
	    \centering
        \includegraphics[width=.75\linewidth]{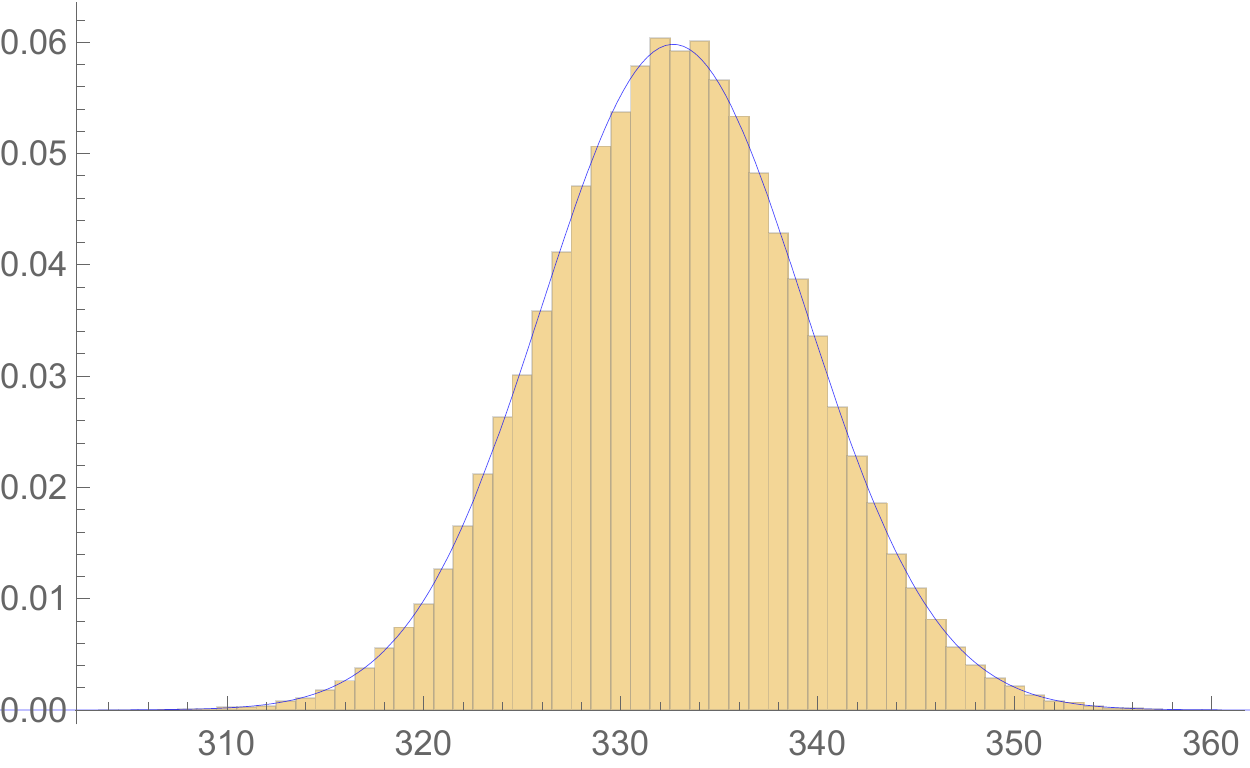}\\
        \small Figure 1. Histogram of peaks of $10^5$ samples drawn from $\cC_{2^{250}4^{125}} \subset S_{1000}$.
    \end{minipage}
\end{center}
The histogram suggests that the peaks of permutations in $\cC_{2^{250} 4^{125}}$ are normally distributed, and indeed, the p.d.f.\ of $\cN \left(\frac{n-2}{3}, \frac{2(n+1)}{45}\right)$ with $n = 1000$ fits very well. This suggests that the behavior of peaks for a particular conjugacy class is mostly the same as that of peaks for $S_n$. This does turn out to be true for conjugacy classes with no fixed points, as the following main theorem states that the asymptotic distribution of peaks in conjugacy classes is normal, where the asymptotic mean and variance depend only on the density of fixed points.

\begin{theorem} \label{mainthm}
        Let $C_n$ be a conjugacy class of $S_n$ for each $n \geq 1$. Denote by $\alpha_1(C_n)$ the fraction of fixed points of each element of $C_n$. Suppose that $\pi_n$ is chosen uniformly at random from $C_n$ and that $\alpha_1(C_n)$ converges to some $\alpha \in [0, 1] $ as $n\to\infty$. Then, as $n\to\infty$,
        \begin{align*}
            \frac{p(\pi_n) - \frac{1-\alpha_1(C_n)^3}{3}n}{\sqrt{n}}
            \quad \text{converges in distribution to} \quad
            \cN\left(0, \tfrac{2}{45} + \tfrac{1}{9}\alpha^3 - \tfrac{3}{5}\alpha^5 + \tfrac{4}{9}\alpha^6 \right).
        \end{align*}
\end{theorem}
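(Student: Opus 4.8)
The plan is to turn the exact identity (\ref{eq:peakgen_cycletype}) into an asymptotic statement about the probability generating function of $p(\pi_n)$ and then read off the mean and variance by a saddle-point (Laplace) analysis, in the spirit of Kim and Lee \cite{KimLee}.

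\emph{Step 1 (reduction to a generating function in $w$).} Set $w=\tfrac{4t}{(1+t)^2}$. One checks $1-w=\big(\tfrac{1-t}{1+t}\big)^2$, so for $t\in(0,1)$ we may take $\tfrac{1-t}{1+t}=\sqrt{1-w}$, i.e.\ $t=\tfrac{1-\sqrt{1-w}}{1+\sqrt{1-w}}$. Then the left side of (\ref{eq:peakgen_cycletype}) equals $w\,|\cC_\lambda|\,\E[w^{p(\pi_n)}]$ and the prefactor on the right becomes $2(1-w)^{(n+1)/2}$, so
\begin{align*}
\E\!\big[w^{p(\pi_n)}\big]=\frac{2(1-w)^{(n+1)/2}}{w\,|\cC_\lambda|}\,S_\lambda\big(t(w)\big),\qquad S_\lambda(t):=\sum_{a\ge1}t^aN_\lambda(a),\quad N_\lambda(a):=\prod_i[x_i^{n_i}]\Big(\tfrac{1+x_i}{1-x_i}\Big)^{f_{a,i}}.
\end{align*}
Each $f_{a,i}$ is a polynomial in $a$, hence each factor of $N_\lambda(a)$ is; using $\big(\tfrac{1+x}{1-x}\big)^c=\exp\!\big(2c\operatorname{arctanh}x\big)$ one reads off $\deg_aN_\lambda=\sum_iin_i=n$ with leading coefficient $c_n=2^n|\cC_\lambda|/n!$. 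For the central limit theorem I would use characteristic functions, taking $w=e^{i\theta/\sqrt n}$: then $1-w\sim-i\theta/\sqrt n$ is small, $v:=\sqrt{1-w}$ is small with $\operatorname{Re}v>0$, and $t=e^{-\varepsilon}$ with $\varepsilon=2\big(v+\tfrac{v^3}{3}+\cdots\big)\sim\theta^{1/2}n^{-1/4}$, so what is needed is the behaviour of $S_\lambda(t)$ as $\varepsilon\to0$.

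\emph{Step 2 (asymptotics of $S_\lambda(t)$).} The sum $\sum_at^aN_\lambda(a)$ concentrates near $a_*\sim n/\varepsilon\sim n^{5/4}$, which is $\gg n\ge n_1$; so in the dominant window every factor of $N_\lambda(a)$ with $i\ge2$ contributes only $C_ia^{in_i}(1+o(1))$, and only the fixed-point factor $[x^{n_1}]\big(\tfrac{1+x}{1-x}\big)^a$ — whose degree $n_1$ may be of order $n$ — needs real work. For it I would estimate the Cauchy integral $\tfrac{1}{2\pi i}\oint e^{2a\operatorname{arctanh}x}x^{-n_1-1}\,dx$ by the saddle point $x_*\approx n_1/(2a)$, obtaining, uniformly for $a$ in that window,
\begin{align*}
[x^{n_1}]\Big(\tfrac{1+x}{1-x}\Big)^a=\frac{(2a)^{n_1}}{n_1!}\exp\!\Big(\frac{n_1^3}{12a^2}-\frac{3n_1^5}{160a^4}+o(1)\Big).
\end{align*}
Inserting this and applying Laplace's method to $\sum_ae^{\Phi(a)}$ with $\Phi(a)=-\varepsilon a+n\log a+\tfrac{n_1^3}{12a^2}-\tfrac{3n_1^5}{160a^4}+\cdots$, and absorbing $-n+n\log n+\tfrac12\log(2\pi n)$ into $\log n!$, I expect
\begin{align*}
\log S_\lambda(t)=\log(c_nn!)-(n+1)\log\varepsilon+\frac{n\alpha_1^3\varepsilon^2}{12}+\frac{n\alpha_1^6\varepsilon^4}{72}-\frac{3n\alpha_1^5\varepsilon^4}{160}+o(1),
\end{align*}
where $\alpha_1=\alpha_1(C_n)=n_1/n$, the $\varepsilon^2$ term is $\tfrac{n_1^3}{12a_*^2}$, and the $\varepsilon^4$ terms come respectively from the shift of the $a$-saddle induced by $\tfrac{n_1^3}{12a^2}$ and from the $\tfrac{n_1^5}{a^4}$ correction.

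\emph{Step 3 (assembling $G_n$ and concluding).} Substituting this, together with $\log\varepsilon=\log2+\tfrac12\log(1-w)+\tfrac{v^2}{3}+\tfrac{13v^4}{90}+\cdots$, into $\log\E[w^{p(\pi_n)}]=\log2+\tfrac{n+1}{2}\log(1-w)-\log w-\log|\cC_\lambda|+\log S_\lambda(t)$, the $\tfrac{n+1}{2}\log(1-w)$ terms cancel against $-(n+1)\log\varepsilon$ and $\log(c_nn!)=n\log2+\log|\cC_\lambda|$. Expanding in $v$ and keeping only the powers that survive after $w=e^{i\theta/\sqrt n}$ (the coefficient of $v^{2k}$ is $O(n)$ and $v^{2k}\sim(-i\theta/\sqrt n)^k$, so only $k=1,2$ matter) should give
\begin{align*}
\log\E\!\big[e^{i\theta p(\pi_n)/\sqrt n}\big]=i\theta\,\frac{(1-\alpha_1^3)\sqrt n}{3}-\theta^2\Big(\tfrac{1}{45}+\tfrac{\alpha_1^3}{18}-\tfrac{3\alpha_1^5}{10}+\tfrac{2\alpha_1^6}{9}\Big)+o(1).
\end{align*}
Since $\mu_n=\tfrac{1-\alpha_1^3}{3}n$, multiplying by $e^{-i\theta\mu_n/\sqrt n}$ kills the linear term, and using $\alpha_1(C_n)\to\alpha$ one obtains $\E\big[e^{i\theta(p(\pi_n)-\mu_n)/\sqrt n}\big]\to\exp\!\big(-\tfrac12\sigma^2\theta^2\big)$ with $\sigma^2=2\big(\tfrac1{45}+\tfrac{\alpha^3}{18}-\tfrac{3\alpha^5}{10}+\tfrac{2\alpha^6}{9}\big)=\tfrac{2}{45}+\tfrac19\alpha^3-\tfrac35\alpha^5+\tfrac49\alpha^6$; Lévy's continuity theorem then yields the claim.

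\emph{Main obstacle.} The crux is the uniform control of $[x^{n_1}]\big(\tfrac{1+x}{1-x}\big)^a$ and its interaction with the sum over $a$. Because $n_1$ scales with $n$, the expansion of this factor in powers of $1/a$ is only asymptotic — its successive corrections $n_1^3/a^2$, $n_1^5/a^4$, $n_1^7/a^6$ are of sizes $n^{1/2}$, $1$, $o(1)$ in the dominant range — so one must prove the displayed estimate to exactly the $a^{-4}$ order and uniformly over a window of $a$'s around $a_*$ (with $n_1$ bounded and $n_1\to\infty$ treated together), justify Laplace's method there with complex $\varepsilon$, and show that $a\le n$ contributes negligibly. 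Everything else — the two-layer expansion (saddle in $a$, then in $v$) and the bookkeeping of several cancelling $O(\sqrt n)$ and $O(1)$ terms — is routine but delicate, since both the mean and the variance appear only after these cancellations; the emergence of the $\alpha^5,\alpha^6$ terms in $\sigma^2$ precisely from the $a^{-4}$ correction and the saddle shift is a useful consistency check.
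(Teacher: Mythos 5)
Your outline reproduces the paper's strategy in all of its analytic essentials: you start from the exact identity \eqref{eq:peakgen_cycletype}, argue that the $i\ge 2$ coefficient factors contribute only a negligible relative correction in the dominant window $a\asymp n^{5/4}$ (the paper's Lemmas \ref{lemma:fai_est} and \ref{lemma:kai_asymp_large}), isolate the fixed-point factor and expand it to the $a^{-4}$ order --- your displayed estimate $[x^{\parti_1}]\bigl(\tfrac{1+x}{1-x}\bigr)^a=\tfrac{(2a)^{\parti_1}}{\parti_1!}\exp\bigl\{\tfrac{\parti_1^3}{12a^2}-\tfrac{3\parti_1^5}{160a^4}+o(1)\bigr\}$ is exactly Lemma \ref{lemma:ka1_asymp_large} --- then run Laplace's method in $a$ and do the cancellation bookkeeping; your identification of the $\alpha_1^6$ term with the saddle shift and of the $\alpha_1^5$ term with the $a^{-4}$ correction is correct and reproduces the constants of Theorem \ref{maintechthm} and Lemma \ref{lemma:large_range}, and you also note the need to kill the range $a\le \delta n^{5/4}$ (Lemma \ref{lemma:small_range}). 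The one genuine divergence is the limit-theorem device: you set $w=e^{i\theta/\sqrt n}$ and aim for L\'evy's continuity theorem, whereas the paper sets $w=e^{-s/\sqrt n}$ with $s>0$ real and invokes the modified Curtiss theorem (Theorem \ref{theorem:curtiss}), which only requires convergence of the m.g.f.\ on an open set of real $s$. This is not a cosmetic difference: with complex $w$ on the unit circle one has $t$ complex with $\operatorname{Re}\log(1/t)\asymp n^{-1/4}$ but an oscillating phase of the same order, so the concentration of $\sum_a t^a N_\lambda(a)$, the sum-to-integral comparison, and Laplace's method all need complex-saddle justifications --- precisely the step you flag as the ``main obstacle'' and leave open; none of the paper's real, positive-term comparisons carry over verbatim. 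So as written your plan has a real unresolved step, but it is avoidable rather than fatal: if you replace characteristic functions by the one-sided m.g.f.\ and Theorem \ref{theorem:curtiss}, every quantity in your Steps 2--3 becomes positive, the Laplace analysis is purely real, and your outline collapses onto the paper's proof, with the uniform error control you defer supplied by Lemmas \ref{lemma:small_range}--\ref{lemma:large_range}. (If you insist on characteristic functions, you must additionally prove the uniformity of the $\mK_{a,1}$ expansion and of the saddle analysis for complex $\varepsilon$ with $\operatorname{Re}\varepsilon\asymp|\operatorname{Im}\varepsilon|$, which is substantially more work than the real case and is not sketched in your proposal.)
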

    
Our main strategy is to adopt the modified Curtiss' theorem from~\cite{KimLee}, which relates convergence in distribution of random variables to the pointwise convergence of their moment generating functions on an open set. In this regard, the main theorem is a direct consequence of the following technical theorem:
    
\begin{theorem} \label{maintechthm}
    For each $s > 0$, there exists a universal constant $C = C(s) > 0$, depending only on $s$, such that the following is true: Let $\cC_{\lambda} \subseteq S_n$ be the conjugacy class of cycle type $\lambda = 1^{\parti_1} 2^{\parti_2} \cdots$ and $\pi$ be chosen uniformly at random from $\cC_{\lambda}$. Denote by $\alpha_1 = \parti_1 / n$ the density of fixed points. Then,
    \begin{align*}
        \E \left[ e^{-s p(\pi)/\sqrt{n}} \right]
        = \exp\left\{ -\frac{1-\alpha_1^3}{3}s\sqrt{n} + \left( \frac{1}{45} + \frac{\alpha_1^3}{18} - \frac{3\alpha_1^5}{10} + \frac{2\alpha_1^6}{9} \right)s^2 + E_{\lambda,s} \right\},
    \end{align*}
    where $|E_{\lambda,s}| \leq Cn^{-1/4}$.
\end{theorem}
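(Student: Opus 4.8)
The plan is to evaluate the left side of \eqref{eq:peakgen_cycletype} at a value of $t$ that converts $\bigl(\tfrac{4t}{(1+t)^2}\bigr)^{p(\pi)}$ into $e^{-sp(\pi)/\sqrt n}$, and then to read off the asymptotics of the right side by ``analytic combinatorics''. Since $\tfrac{4t}{(1+t)^2}=1-\bigl(\tfrac{1-t}{1+t}\bigr)^2$, put $v=\tfrac{1-t}{1+t}$, so $\tfrac{4t}{(1+t)^2}=1-v^2$, $t=\tfrac{1-v}{1+v}$, $1\pm t=\tfrac2{1+v},\tfrac{2v}{1+v}$; choosing $v$ with $1-v^2=e^{-s/\sqrt n}$ gives $\bigl(\tfrac{4t}{(1+t)^2}\bigr)^{p(\pi)+1}=e^{-s(p(\pi)+1)/\sqrt n}$ and $v=\sqrt{1-e^{-s/\sqrt n}}=\sqrt s\,n^{-1/4}(1+O(n^{-1/2}))\downarrow0$. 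Using $|\cC_\lambda|=n!/\prod_i i^{n_i}n_i!$, identity \eqref{eq:peakgen_cycletype} rearranges to
\[
\E\bigl[e^{-sp(\pi)/\sqrt n}\bigr]=\frac{2\,v^{\,n+1}}{(1-v^2)\,|\cC_\lambda|}\sum_{a\ge1}\Bigl(\tfrac{1-v}{1+v}\Bigr)^{a}P_a(\lambda),\qquad P_a(\lambda):=\prod_i[x_i^{n_i}]\Bigl(\tfrac{1+x_i}{1-x_i}\Bigr)^{f_{a,i}},
\]
so everything reduces to the $n\to\infty$, $v\downarrow0$ asymptotics of this $a$-sum. (Consistency check: for $\lambda=1^n$ one has $f_{a,1}=a$ and $\sum_a(\tfrac{1-v}{1+v})^aP_a(\lambda)=\tfrac{1-v^2}{2v^{\,n+1}}$ by a geometric series, so the right side is exactly $1$, matching $p(\mathrm{id})=0$, $\alpha_1=1$; this pins down the normalizations and warns that the cancellations are delicate.)

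\textbf{Step 2 (the summand).} Write $\log\bigl((\tfrac{1-v}{1+v})^aP_a(\lambda)\bigr)=a\log\tfrac{1-v}{1+v}+\sum_i\log[x_i^{n_i}]\bigl(\tfrac{1+x_i}{1-x_i}\bigr)^{f_{a,i}}$. For each $i$ I would estimate $[x^m]\bigl(\tfrac{1+x}{1-x}\bigr)^f$ by a saddle point: $x^{-m-1}e^{2f\operatorname{arctanh}x}$ has a saddle $x_0$ solving $\tfrac{2fx}{1-x^2}=m$, and when $f\gg m$ this gives $x_0\approx\tfrac m{2f}$ and $\log[x^m]\bigl(\tfrac{1+x}{1-x}\bigr)^f=m\log(2f)-\log m!+\tfrac{m^3}{12f^2}+(\text{lower order})$. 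The maximizing $a$ will turn out to be $a^\ast\asymp n/v\asymp n^{5/4}$ (Step 3), so for every $i\ge2$, $f_{a,i}=\tfrac1{2i}\sum_{d\mid i,\,d\text{ odd}}\mu(d)(2a)^{i/d}\sim\tfrac{(2a)^i}{2i}$ is $\gg n\ge n_i$ throughout the relevant range of $a$; those factors contribute the clean, $\lambda$-insensitive amount $\sum_{i\ge2}\bigl(n_i\,i\log(2a)-n_i\log i-\log n_i!\bigr)+O(n^{-2})$ (using $\sum_i in_i=n$; the arithmetic corrections $f_{a,i}-\tfrac{(2a)^i}{2i}$ only matter for the smallest parts). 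The one factor that must be kept in full saddle-point form is $i=1$ ($f_{a,1}=a$, but $n_1$ may be $\asymp n$, so $n_1^3/a^2$ is not negligible); this is exactly where the cube $\alpha_1^3$ will enter, through the $\tfrac{n_1^3}{12a^2}$ term and the precise shape of $x_0$.

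\textbf{Step 3 (Laplace in $a$, then assembly).} These estimates give $\tfrac d{da}\log P_a(\lambda)=\tfrac na-\tfrac{n_1^3}{6a^3}+\cdots$, so the summand is log-concave with maximizer $a^\ast=\tfrac n{2v}\bigl(1-\tfrac{(1+2\alpha_1^3)v^2}3+\cdots\bigr)$, curvature $|\Phi_\lambda''(a^\ast)|\asymp v^2/n$, and peak width $\asymp n^{3/4}\gg1$; so Laplace's method (equivalently Euler--Maclaurin) replaces the $a$-sum by $e^{\Phi_\lambda(a^\ast)}\sqrt{2\pi/|\Phi_\lambda''(a^\ast)|}\,(1+o(1))$. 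Substituting $a^\ast$ and Stirling's expansions for $|\cC_\lambda|$ and the $\log n_i!$ into the prefactor and expanding the exponent in the two small quantities $v\asymp n^{-1/4}$ and $1/n$, the $\lambda$-dependent pieces $\sum_i n_i\log i$ and $\sum_i\log n_i!$ cancel against $\log|\cC_\lambda|$, leaving an exponent depending on $\lambda$ only through $n$ and $n_1$. Tracking orders $\sqrt n$, $1$, $n^{-1/4}$ should then produce $-\tfrac{1-\alpha_1^3}3 s\sqrt n$ (the $-\tfrac13$ from the cubic Taylor term of $\operatorname{arctanh}v$ in $a^\ast\log\tfrac{1-v}{1+v}$, the $+\tfrac{\alpha_1^3}3$ from the $i=1$ saddle), the stated $s^2$-coefficient (its $\alpha_1^3$ via $nv^2=s\sqrt n-\tfrac{s^2}2+\cdots$, its $\alpha_1^5$ from the quintic $\operatorname{arctanh}$ term, its $\alpha_1^6$ from the $\alpha_1^3$-correction in $a^\ast$ fed into $\tfrac{n_1^3}{12(a^\ast)^2}$), and $E_{\lambda,s}$ for everything else.

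\textbf{Main obstacle.} The crux is uniformity in $\lambda$: $\Phi_\lambda$ depends on every $n_i$, but the answer depends only on $\alpha_1$ and $n$, so one must show all other $\lambda$-dependence lies inside $|E_{\lambda,s}|\le C(s)n^{-1/4}$ with $C$ independent of $\lambda$. This requires (i) bounds on $[x^{n_i}]\bigl(\tfrac{1+x}{1-x}\bigr)^{f_{a,i}}$ uniform in $(i,n_i)$ and in $a$ across the whole peak, including part sizes $i$ of order $n$ and the transitional regime $f_{a,i}\asymp n_i$ (relevant for $i=1$ and for $a$ near the edges of the peak), where neither the ``large-$f$'' nor the ``large-$m$'' expansion applies cleanly; (ii) summing the per-$i$ errors over the possibly $\asymp\sqrt n$ distinct part sizes with a total still $o(n^{-1/4})$; and (iii) controlling the $a\ll a^\ast$ and $a\gg a^\ast$ tails of the sum, where both $f_{a,i}\sim(2a)^i/2i$ and the saddle description degrade. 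This is where the Kim--Lee \cite{KimLee} machinery must be imported and sharpened, and I expect essentially all the work to lie here, with Steps 1--3 the routine skeleton; the loss of a full power (only $n^{-1/4}$) presumably reflects that for some cycle types the true mean of $p$ sits $\asymp n^{1/4}$ away from $\tfrac{1-\alpha_1^3}3 n$.
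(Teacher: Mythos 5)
Your outline follows the paper's proof quite closely: the same substitution $4t/(1+t)^2=e^{-s/\sqrt n}$ (your $v$ is the paper's $(1-t)/(1+t)$, with $\log(1/t)=2\operatorname{arctanh}v$), the same recognition that only the $i=1$ factor needs refined treatment and yields the $\parti_1^3/(12a^2)$ correction, and the same Laplace analysis in $a$ centered at $a^\ast\approx n/\log(1/t)\asymp n^{5/4}$; your attribution of the $\alpha_1^3$, $\alpha_1^5$, $\alpha_1^6$ coefficients matches what Lemmas \ref{lemma:ka1_asymp_large} and \ref{lemma:large_range} produce (the $\alpha_1^6$ term arises in the paper by completing the square in the Gaussian fluctuation integral, which is your ``shifted saddle'' phrased differently). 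The genuine gap is that what you label the ``main obstacle'' --- uniformity in $\lambda$ of every error term --- \emph{is} the content of Theorem \ref{maintechthm}: the statement is exactly the uniform bound $|E_{\lambda,s}|\le C(s)n^{-1/4}$, and your proposal defers all of that work to ``importing and sharpening'' \cite{KimLee}. As written, Steps 1--3 reproduce the paper's Section 3.1 heuristic plus a correct bookkeeping of constants, but not a proof of the theorem.

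It is worth noting that the paper's route makes your feared difficulties (i)--(iii) largely evaporate, so if you complete the argument I would recommend it over per-coefficient saddle points uniform in $(i,\parti_i,a)$. Instead of asymptotics for $[x^{m}]\left(\frac{1+x}{1-x}\right)^{f}$, the paper uses the exact identity \eqref{eq:kai}, writing the coefficient as $\frac{(2f_{a,i})^{\parti_i}}{\parti_i!}\mK_{a,i}$ with $\mK_{a,i}$ a binomial average. Then (a) the entire range $a\le\delta n^{5/4}$ is discarded by a crude union bound (Lemmas \ref{lemma:kai_asymp_small} and \ref{lemma:small_range}), giving an exponentially small $\cO(\rho^{n})$ contribution, so the transitional regime $f_{a,i}\asymp \parti_i$ never has to be analyzed; (b) for $a>\delta n^{5/4}$ and $i\ge2$ one has $f_{a,i}\ge 2\parti_i$ automatically, and the elementary bounds $\left(1\pm\parti_i/f_{a,i}\right)^{\parti_i}$ give $\prod_{i\ge2}\mK_{a,i}=e^{\cO(n^2/a^2)}=e^{\cO(n^{-1/2})}$ (Lemma \ref{lemma:kai_asymp_large}), so summing per-$i$ errors over the $\asymp\sqrt n$ distinct part sizes is a non-issue and your claimed $\cO(n^{-2})$ total for $i\ge 2$ is neither needed nor justified as stated; (c) only $\mK_{a,1}$ requires a two-term expansion (Lemma \ref{lemma:ka1_asymp_large}), and the arithmetic corrections $f_{a,i}-\frac{(2a)^i}{2i}$ are controlled once and for all by Lemma \ref{lemma:fai_est}. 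Finally, your closing speculation that the $n^{-1/4}$ loss reflects the mean sitting $\asymp n^{1/4}$ away from $\frac{1-\alpha_1^3}{3}n$ is not how the error arises in the paper: $n^{-1/4}$ is simply the natural scale $\log(1/t)\asymp n^{-1/4}$ entering the sum-to-integral discretization and the Taylor expansions.
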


This theorem is interesting in its own right, because the uniform estimate allows us to readily extend the scope of the main theorem to a more general class of sequences $(C_n)$. More precisely, the statement of Theorem \ref{mainthm} readily extends to the case where each $C_n$ is simply a conjugacy-invariant subset of $S_n$ such that every element of $C_n$ has the same number of fixed points. For example, if we consider the set of all elements of $S_n$ with zero fixed points, we would obtain a central limit theorem for peaks of derangements.

\section{Central limit theorem for peaks of a random permutation in $S_n$}

Denoting the peak generating function by
\begin{align*}
    W_n(t) = \sum_{\pi \in S_n} t^{p(\pi) + 1},
\end{align*}
it is well known \cite[p779]{Stempeak} that $A_n(t)$ and $W_n(t)$ are related by the identity
\begin{align}
	\label{eq:peakgen}
	W_n\left(\frac{4t}{(1+t)^2}\right) = \left(\frac{2}{1+t}\right)^{n+1}A_n(t).
\end{align}
Our aim in this section is to identify the asymptotic distribution of peaks of a random permutation in $S_n$ using \eqref{eq:peakgen}.

\subsection{Computing mean and variance of peaks in $S_n$}

We begin by calculating the derivatives of $A_n(t)$ at $1$ up to the fourth order.
\begin{lemma}
    \label{lemma:eulerian_derivatives}
    We have
    \begin{align*}
    	A_n^{(0)}(1) &= n! , \\
    	A_n^{(1)}(1) &= n! \cdot \frac{n+1}{2} \mathbf{1}_{\{ n \geq 1\}}, \\
    	A_n^{(2)}(1) &= n! \cdot \frac{3n^2 + n - 2}{12} \mathbf{1}_{\{ n \geq 2 \}}, \\
    	A_n^{(3)}(1) &= n! \cdot \frac{n^3 - 2n^2 - n + 2}{8} \mathbf{1}_{\{ n \geq 3\}}, \text{ and} \\
    	A_n^{(4)}(1) &= n! \cdot \frac{15n^4 - 90n^3 + 125n^2 + 78n - 152}{240} \mathbf{1}_{\{ n \geq 4\}}.
    \end{align*}
\end{lemma}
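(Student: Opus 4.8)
The plan is to read off the Taylor coefficients of $A_n(t)$ at $t=1$ — equivalently the derivatives, since $A_n^{(k)}(1)=k!\,[u^k]A_n(1+u)$ — from the classical recursion for Eulerian polynomials, which collapses to a simple two-term recursion once one differentiates and sets $t=1$.

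First I would record the recursion
\[
A_{n+1}(t) = (n+1)\,t\,A_n(t) + t(1-t)\,A_n'(t), \qquad n\ge 1,\qquad A_1(t)=t.
\]
This follows from the combinatorial definition by inserting the largest symbol $n+1$ into one of the $n+1$ gaps of a permutation of $[n]$: placing it immediately after one of the $d(\pi)$ descents, or at the extreme right, leaves the descent count at $d(\pi)$, while each of the remaining $n-d(\pi)$ gaps increases it by one; summing $t^{d(\pi)+1}$ over $S_{n+1}$ and re-expressing $\sum_{\pi\in S_n}d(\pi)\,t^{d(\pi)}$ in terms of $A_n$ and $A_n'$ gives the displayed identity (it is $t$ times the familiar recursion for $\sum_\pi t^{d(\pi)}$). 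Then I would apply the Leibniz rule, differentiating $k$ times and evaluating at $t=1$: the factor $(n+1)t$ has nonzero derivatives at $1$ only in orders $0$ and $1$, and $t(1-t)$ vanishes at $1$ with first derivative $-1$, second derivative $-2$, and nothing higher, so all but a few terms drop out and one is left with
\[
A_{n+1}^{(k)}(1) = (n+1-k)\,A_n^{(k)}(1) + k\,(n-k+2)\,A_n^{(k-1)}(1).
\]

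It then remains to solve this recursion for $k=0,1,2,3,4$. The case $k=0$ gives $A_{n+1}^{(0)}(1)=(n+1)A_n^{(0)}(1)$, hence $A_n^{(0)}(1)=n!$; running through $k=1,2,3,4$ in order, with base values read off from $A_1(t)=t$, one checks at each stage that $A_n^{(k)}(1)/n!$ is a polynomial in $n$ of degree $k$ by substituting this ansatz into the recursion and verifying the resulting polynomial identity, which reproduces exactly the five formulas in the statement. The indicator factors $\mathbf 1_{\{n\ge k\}}$ I would treat separately: since the reverse word $n\,(n-1)\cdots 1$ is the unique element of $S_n$ with $n-1$ descents, $A_n(t)$ has degree exactly $n$, so $A_n^{(k)}(1)=0$ whenever $n<k$, whereas at $n=k$ the polynomial formulas evaluate to $k!$ times the leading coefficient $1$ of $A_k$; hence they hold precisely on $\{n\ge k\}$. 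There is no conceptual obstacle — the only mildly laborious point is the polynomial arithmetic for $k=4$. An equivalent, equally mechanical route is to substitute $t=1+u$ into the bivariate exponential generating function $\sum_{n\ge 0}A_n(t)\,x^n/n! = t(t-1)/(t-e^{(t-1)x})$, cancel the common factor of $u$ so that $\sum_n A_n(1+u)\,x^n/n!$ equals $(1+u)$ divided by $(1-x)-\sum_{m\ge 1}u^m x^{m+1}/(m+1)!$, expand this reciprocal as a geometric series in $u$, and extract $[u^k][x^n]$ via $[x^n]\,x^a/(1-x)^b=\binom{n-a+b-1}{b-1}$; this makes the degree-$k$-in-$n$ behavior transparent and yields the same polynomials.
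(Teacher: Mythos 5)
Your proposal is correct, but it takes a genuinely different route from the paper. The paper starts from the identity $A_n(t)=(1-t)^{n+1}\sum_{a\ge 1}a^n t^a$, rewrites it as $A_n(t)=\sum_{k}(n-k)!\stirlingii{n}{n-k}t^{n-k}(1-t)^k$ using Stirling numbers of the second kind, so that $A_n^{(p)}(1)=p![s^p]A_n(1+s)$ becomes a finite alternating sum over $k\le p$, and then substitutes known polynomial formulas for $\stirlingii{n}{n-k}$ (via second-order Eulerian numbers); the burden is carried by those Stirling-number identities and a final mechanical substitution. You instead use the insertion recursion $A_{n+1}(t)=(n+1)tA_n(t)+t(1-t)A_n'(t)$ (which is indeed the correct form for the paper's shifted normalization $A_n(t)=\sum_\pi t^{d(\pi)+1}$, $A_1(t)=t$, though your aside that it is ``$t$ times'' the usual recursion is slightly loose: conjugating $B_{n+1}=(1+nt)B_n+t(1-t)B_n'$ by $A_n=tB_n$ turns $1+nt$ into $(n+1)t$), apply Leibniz at $t=1$ where $t(1-t)$ vanishes to order one, and obtain the two-term recursion $A_{n+1}^{(k)}(1)=(n+1-k)A_n^{(k)}(1)+k(n-k+2)A_n^{(k-1)}(1)$; I checked this recursion and verified that the five stated formulas satisfy it with the correct base values (the degree argument gives $A_n^{(k)}(1)=0$ for $n<k$ and $A_k^{(k)}(1)=k!$, matching the indicators). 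Your route is self-contained and elementary -- no Stirling-number input -- at the cost of verifying a polynomial identity in $n$ at each order $k\le 4$ (or equivalently solving a first-order linear recurrence order by order), whereas the paper's route avoids induction but imports the explicit $\stirlingii{n}{n-k}$ expansions; both end in comparable polynomial arithmetic at orders $3$ and $4$, and your EGF alternative is a fine second mechanical option.
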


\begin{proof}
    It is well known that the Eulerian polynomials satisfy the identity
    \begin{align*}
        A_n(t) = (1 - t)^{n+1} \sum_{a\geq 1} a^n t^a.
    \end{align*}
    Recall that the Stirling numbers of the second kind $\stirlingii{n}{k}$ count the number of partitions of an $n$-element set into $k$ blocks. Plugging the expansion $a^n = \sum_{k=0}^{n} \stirlingii{n}{k} \frac{a!}{(a-k)!}$ into the expression above, we see that
    \begin{align*}
        A_n(t)
        &= (1-t)^{n+1} \sum_{a=0}^{\infty} \left( \sum_{k=0}^{n} \stirlingii{n}{k} \frac{a!}{(a-k)!} \right) t^a \\
        &= (1-t)^{n+1} \sum_{k=0}^{n} \stirlingii{n}{k} \frac{k!t^k}{(1-t)^{k+1}} \\
        &= \sum_{k=0}^{n} k! \stirlingii{n}{k} t^k (1 - t)^{n-k} \\
        &= \sum_{k=0}^{n} (n-k)! \stirlingii{n}{n-k} t^{n-k} (1 - t)^{k}.
    \end{align*}
    Now one can compute $A_n^{(p)}(1)$ by plugging the above identity into $A_n^{(p)}(1) = p![s^p]A_n(1+s)$. More specifically, if $p > n$, then $A_n(1+s)$ has degree $n$, and so, $A_n^{(p)}(1) = 0$. If $p \leq n$, then
    \begin{align*}
        A_n^{(p)}(1)
         = p![s^p]A_n(1+s)
        &= p![s^p]\sum_{k=0}^{n} (-1)^k (n-k)! \stirlingii{n}{n-k} (1+s)^{n-k} s^{k} \\
        &= p! \sum_{k=0}^{p} (-1)^k (n-k)! \stirlingii{n}{n-k} \binom{n-k}{p-k}.
    \end{align*}
    For each given $p$, the last sum can be computed by calculating $\stirlingii{n}{n-k}$'s for $k = 0, \cdots, p$. For instance, $\stirlingii{n}{n} = 1$ and $\stirlingii{n}{n-1} = \binom{n}{2}$, and for larger values of $k$, they can be systematically computed by utilizing the relationship between the Stirling numbers of the second kind and Eulerian numbers of the second kind (see equation (6.43) of \cite{GKP}). The $\stirlingii{n}{n-k}$'s relevant to us are
    \begin{align*}
        %\stirlingii{n}{n} &= 1, \\
        %\stirlingii{n}{n-1} &= \binom{n}{2}, \\
        \stirlingii{n}{n-2} &= 2 \binom{n}{4} + \binom{n+1}{4}, \\
        \stirlingii{n}{n-3} &= 6 \binom{n}{6} + 8 \binom{n+1}{6} + \binom{n+2}{6}, \text{ and}\\
        \stirlingii{n}{n-4} &= 24 \binom{n}{8} + 58 \binom{n+1}{8} + 22 \binom{n+2}{8} + \binom{n+3}{8}.
    \end{align*}
    Plugging these back into the formula for $A_n^{(p)}(1)$ provides the desired lemma.
\end{proof}

Next, \eqref{eq:peakgen} relates $W_n^{(p)}(1)$ to the derivatives of $A_n(t)$ up to order $2p$ evaluated at~$1$. Differentiating both sides of \eqref{eq:peakgen} gives us
\begin{align*}
    - \frac{4(t-1)}{(1+t)^3}W_n'\left(\frac{4t}{(1+t)^2}\right)
    = -\frac{(n+1)2^{n+1}}{(1+t)^{n+2}}A_n(t) + \frac{2^{n+1}}{(1+t)^{n+1}}A_n'(t),
\end{align*}
and by multiplying $- \frac{(1+t)^3}{4(t-1)}$ to both sides and simplifying, we see that
\begin{align*}
	W_n'\left(\frac{4t}{(1+t)^2}\right) = \left(\frac{2}{1+t}\right)^{n-1} \frac{(n+1)A_n(t) - (t+1)A_n'(t)}{t-1}.
\end{align*}
This formula cannot be evaluated directly at $ t = 1$, but we can use L'H\^opital's rule to get
\begin{align*}
	W_n'(1)
	&= \lim_{t \to 1} W_n'\left(\frac{4t}{(1+t)^2}\right)
	 = \lim_{t \to 1} \frac{(n+1)A_n(t) - (t+1)A_n'(t)}{t-1} \\
	&= n A_n'(1) - 2A_n''(1) 
	 = n! \cdot \frac{n+1}{3}, \qquad \text{if } n \geq 2.
\end{align*}
The last step is a consequence of Lemma \ref{lemma:eulerian_derivatives}. The second derivative $W_n''(1)$ can be computed in similar fashion. By differentiating both sides of \eqref{eq:peakgen} twice and simplifying, we obtain an identity relating $W_n''$ to the derivatives of $A_n$:
\begin{align*}
	W_n''\left(\frac{4t}{(1+t)^2}\right) = \left(\frac{2}{1+t}\right)^{n-3} \frac{P_n(t)}{(t-1)^3},
\end{align*}
where $P_n(t)$ is given by
\begin{align*}
    P_n(t) = (n+1)(nt-n+2)A(t) - 2(t+1)(nt-n+1)A'(t) + (t-1)(t+1)^2 A''(t).
\end{align*}
Similarly as before, we find $W_n''(1)$ by using L'H\^opital's rule:
\begin{align*}
	W_n''(1)
	&= \lim_{t \to 1} W_n''\left(\frac{4t}{(1+t)^2}\right)
	= \lim_{t \to 1} \frac{P_n(t)}{(t-1)^3}
	= \frac{P_n^{(3)}(1)}{6} \\
	&= \frac{(3n^2-9n+6) A^{(2)}(1) - (10n-20) A^{(3)}(1) + 8 A^{(4)}(1)}{6} \\
	&= n! \cdot \frac{(5n-8)(n+1)}{45}, \qquad \text{if } n \geq 4,
\end{align*}
where the last step follows from Lemma \ref{lemma:eulerian_derivatives}. Finally, since $W_n'(1) = n! \E [p(\pi) + 1]$ and $W_n''(1) = n! \E [(p(\pi) + 1)p(\pi)]$, we have
\begin{align*}
	\E[p(\pi)] = \frac{W_n'(1)}{n!} - 1 = \frac{n-2}{3} \qquad \text{if } n \geq 2,
\end{align*}
and
\begin{align*}
	\Var(p(\pi)) = \frac{W_n''(1)}{n!} + \frac{W_n'(1)}{n!} - \left( \frac{W_n'(1)}{n!} \right)^2 = \frac{2(n+1)}{45} \qquad \text{if } n \geq 4.
\end{align*}

At this point, it is worth noting \eqref{eq:peakgen} implies that, like $A_n(t)$, $W_n(t)$ has only real roots, and so, by Harper's method~\cite{Harper}, we can obtain a central limit theorem for peaks of a random permutation in $S_n$. In the upcoming section, we give a new proof of this central limit theorem by using analytic combinatorics and will go further to prove a central limit theorem for peaks in arbitrary conjugacy classes of $S_n$, where the mean and variance depend only on the density of fixed points in the conjugacy classes.

\subsection{Establishing the asymptotic normality of peaks in $S_n$}

Kim and Lee~\cite{KimLee} proved the following modification of Curtiss' theorem:
\begin{theorem}
    \label{theorem:curtiss}
    Let $X_n$ be random vectors in $\mathbb{R}^d$ for each $n \in \mathbb{N} \cup \left\{ \infty \right\}$ and $M_{X_n}(s) = \E[e^{sX_n}]$ be the moment generating function (m.g.f\@.) of $X_n$. Suppose that there is a non-empty open subset $U \subseteq \mathbb{R}^d$ such that $\lim_{n \to \infty} M_{X_n}(s) = M_{X_\infty}(s)$ for all $s \in U$. Then, $X_n$ converges in distribution to $X_\infty$.
\end{theorem}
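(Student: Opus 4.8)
The plan is to prove the theorem directly via the subsequence principle, using Helly's selection theorem for sub-probability measures together with a uniform estimate on the tails of $e^{\langle s,x\rangle}$ against the laws of the $X_n$. This route avoids exponential tilting and, crucially, never requires $0 \in U$. For clarity I would present the one-dimensional case first and indicate the changes for $\mathbb{R}^d$ at the end.

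First I would fix an open interval $(a,b) \subseteq U$ and record that, since $M_{X_n}(s) \to M_{X_\infty}(s)$ with both sides finite, $\sup_n M_{X_n}(s) < \infty$ for every $s \in (a,b)$. The main computational input is the following tail bound: for $a < s_- < s < s_+ < b$ and $R > 0$, using $e^{sx} \le e^{-(s-s_-)R}e^{s_-x}$ on $\{x < -R\}$ and $e^{sx} \le e^{-(s_+-s)R}e^{s_+x}$ on $\{x > R\}$, one gets, writing $\mu_n$ for the law of $X_n$,
\[
    \int_{|x|>R} e^{sx}\,\mu_n(dx) \;\le\; e^{-(s-s_-)R}M_{X_n}(s_-) + e^{-(s_+-s)R}M_{X_n}(s_+),
\]
which tends to $0$ as $R \to \infty$, uniformly in $n$. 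The point is that the comparison exponents $s_\pm$ only need to straddle $s$ inside $(a,b)$; no sign constraint enters.

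Next I would run the subsequence argument. Given any subsequence, Helly's theorem yields a further subsequence along which $\mu_n$ converges vaguely to some sub-probability measure $\mu$. Testing vague convergence against $C_c$ functions of the form $e^{sx}\varphi_R(x)$ (a smooth cutoff $\varphi_R$ equal to $1$ on $[-R,R]$) and invoking the tail bound to discard the mass of $\{|x|>R\}$ uniformly, I would upgrade to $\int e^{sx}\,\mu_n(dx) \to \int e^{sx}\,\mu(dx)$ for every $s \in (a,b)$; since the left-hand side equals $M_{X_n}(s) \to M_{X_\infty}(s)$, this gives $\int e^{sx}\,\mu(dx) = M_{X_\infty}(s)$ on $(a,b)$. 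Because both $\mu$ and the law of $X_\infty$ are finite measures with finite Laplace transform on $(a,b)$, and the Laplace transform on an open interval is injective — which one proves by extending $z \mapsto \int e^{zx}\,\mu(dx)$ holomorphically to the strip $\{a < \Re z < b\}$, applying the identity theorem, and then using injectivity of the Fourier transform of a finite signed measure — I conclude $\mu$ is the law of $X_\infty$. In particular $\mu$ is a probability measure, so the vague convergence along the subsequence is weak convergence, i.e.\ $X_n \Rightarrow X_\infty$ along it. Since every subsequence has a further subsequence converging weakly to the common limit $X_\infty$, the full sequence converges, proving the theorem. In $\mathbb{R}^d$ nothing changes conceptually: fix a closed box $\prod_i [s_{0,i}-\delta, s_{0,i}+\delta] \subseteq U$, note that $\{|x|>R\} \subseteq \bigcup_i \{|x_i| > R/\sqrt d\}$, and on each piece trade $e^{\langle s,x\rangle}$ against $e^{\langle s \pm \delta e_i,\, x\rangle}$; Helly's theorem and Laplace-transform uniqueness both hold on $\mathbb{R}^d$.

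The step I expect to be the crux — and the reason this genuinely strengthens the classical Curtiss theorem, which assumes $0$ lies in the domain of convergence — is the uniform tail control when $U$ stays away from $0$. One cannot bound, say, $\Prob(X_n < -K)$ directly, since no negative exponential moment of $X_n$ is available; the resolution is that one does not need to, because the quantity actually being controlled is $\int_{|x|>R} e^{sx}\,\mu_n(dx)$ with $s$ possibly large and positive, and this is tamed by comparison with a nearby exponent $s_- < s$ that still lies in $U$. Getting this comparison and the attendant truncation bookkeeping right, and verifying that the vague limit $\mu$ — a priori only a sub-probability measure — really has finite Laplace transform on $(a,b)$ and is therefore pinned down there, is where the care is needed; the remaining ingredients (Helly's theorem, Fourier uniqueness, the subsequence principle) are standard.
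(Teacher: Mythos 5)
Your proof is correct, but note that there is nothing in the paper to compare it against: Theorem \ref{theorem:curtiss} is quoted from Kim and Lee \cite{KimLee} and used as a black box, with no internal proof given. Your route --- Helly selection to a sub-probability vague limit $\mu$, the two-sided comparison $e^{sx}\le e^{-(s-s_-)R}e^{s_-x}+e^{-(s_+-s)R}e^{s_+x}$ off $[-R,R]$ with $s_\pm$ straddling $s$ inside $U$ to upgrade vague convergence to convergence of the transforms at each $s$, identification of $\mu$ with the law of $X_\infty$ by analytic continuation to the strip plus Fourier uniqueness of the tilted measures, and the subsequence principle --- is a complete, self-contained argument, and it correctly isolates the real difficulty when $0\notin U$: no exponential moment in the ``opposite'' direction is available, so one cannot establish tightness up front, and a naive reduction by exponential tilting at some $s_0\in U$ to the classical Curtiss theorem stalls at the de-tilting step (uniform integrability of $e^{-\langle s_0,Y_n\rangle}$ is precisely the escaping-mass problem). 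Your scheme sidesteps this because a mass defect in the vague limit is excluded a posteriori: a sub-probability measure whose Laplace transform agrees on an open real set with that of a genuine probability law must equal that law, hence have total mass one, and vague convergence to a probability measure is weak convergence. Three points to tighten in a final write-up: (i) read the hypothesis as saying the m.g.f.'s are finite on $U$, and discard the finitely many $n$ for which $M_{X_n}$ could be infinite at the chosen comparison points; (ii) record explicitly that $\int e^{sx}\,\mu(dx)\le M_{X_\infty}(s)<\infty$ for the vague limit, via Fatou or monotone convergence applied to $e^{sx}\varphi_R$, before comparing transforms (you flag this, and it is exactly where the care is needed); (iii) in $\bR^d$ the uniqueness step deserves a sentence more than ``nothing changes'': finiteness of the transform on an open set gives finiteness on its convex hull by H\"older, hence holomorphy on the corresponding tube domain, a real open set is a set of uniqueness there, and then fixing $s_0$ and invoking Fourier uniqueness for $e^{\langle s_0,x\rangle}\,d\mu$ finishes; all standard, but worth stating since the $d$-dimensional statement is what the paper actually invokes.
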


This theorem will be used in this subsection to prove a central limit theorem about peaks of permutations chosen, uniformly at random, from $S_n$, and in section 3 to prove an analogous theorem about peaks of permutations chosen, uniformly at random, from arbitrary conjugacy classes, where the asymptotic mean and variance are functions of only $\alpha$, the density of fixed points in the conjugacy classes.

\begin{theorem}
    \label{thm:clt_peaks_Sn}
    Let $\pi_n$ be chosen uniformly at random from $S_n$. Then $p(\pi_n)$ is asymptotically normal with mean $\frac{n-2}{3}$ and variance $\frac{2(n+1)}{45}$. More precisely, as $n \to \infty$,
    \begin{align*}
        \frac{p(\pi_n) - \tfrac{n-2}{3}}{\sqrt{n}}
        \quad \text{converges in distribution to} \quad \cN\left(0, \tfrac{2}{45}\right).
    \end{align*}
\end{theorem}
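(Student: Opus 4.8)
The plan is to apply Theorem~\ref{theorem:curtiss} (the modified Curtiss theorem) with $d = 1$: it suffices to show that the moment generating function of the normalized statistic $\bigl(p(\pi_n) - \tfrac{n-2}{3}\bigr)/\sqrt{n}$ converges pointwise, on some open interval around $0$, to the m.g.f.\ $e^{\sigma^2 s^2/2}$ of $\cN(0, \tfrac{2}{45})$, where $\sigma^2 = \tfrac{2}{45}$. Equivalently, writing $W_n(t) = \sum_{\pi} t^{p(\pi)+1}$, I would show that for each real $s$ in a neighborhood of $0$,
\begin{align*}
    \E\bigl[e^{s\,p(\pi_n)/\sqrt{n}}\bigr] = \frac{W_n(e^{s/\sqrt n})}{e^{s/\sqrt n}\, n!} \longrightarrow e^{s^2/45},
\end{align*}
which after the mean shift gives the claimed limit. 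So the whole problem reduces to extracting the asymptotics of $W_n(t)$ for $t = e^{s/\sqrt n}$, i.e.\ for $t$ approaching $1$ at rate $n^{-1/2}$.

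The key tool is the identity \eqref{eq:peakgen}, $W_n\bigl(\tfrac{4t}{(1+t)^2}\bigr) = \bigl(\tfrac{2}{1+t}\bigr)^{n+1} A_n(t)$, together with the closed form $A_n(t) = (1-t)^{n+1}\sum_{a\geq 1} a^n t^a$ recalled in the proof of Lemma~\ref{lemma:eulerian_derivatives}. First I would invert the substitution $u = \tfrac{4t}{(1+t)^2}$: given $u = e^{s/\sqrt n}$ near $1$, the relevant branch is $t = \frac{1 - \sqrt{1-u}}{1+\sqrt{1-u}}$, and since $u - 1 \approx s/\sqrt n$ is small, $\sqrt{1-u}$ is purely imaginary of size $\asymp n^{-1/4}$, so $t$ is a complex number of the form $1 + O(n^{-1/4})$ on the unit circle (roughly $t \approx e^{i\theta}$ with $\theta \asymp n^{-1/4}$). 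Then I would plug this $t$ into $\bigl(\tfrac{2}{1+t}\bigr)^{n+1} A_n(t)$. The factor $\bigl(\tfrac{2}{1+t}\bigr)^{n+1}$ contributes an exponential-in-$n$ prefactor that must be tracked carefully. For $A_n(t) = (1-t)^{n+1} \sum_{a\geq1} a^n t^a$, the dominant contribution comes from the pole/singularity structure; in fact it is cleaner to write $A_n(t)/n! = \sum_{k} \tfrac{1}{k!}\text{(something)}$ or, better, to use $\sum_{a\ge1} a^n t^a$ as a polylogarithm-type series and note that for $t$ near $1$ this behaves like $\Gamma(n+1)/(1-\log t)^{n+1}$ up to lower-order corrections (a Laplace/Euler--Maclaurin estimate on $\sum a^n t^a = \sum a^n e^{-a(-\log t)}$). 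Combining these, $W_n(u)/n!$ should come out to $\exp$ of a quadratic-in-$s$ expression plus an error tending to $0$.

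Concretely, the computation I expect is: set $u = e^{s/\sqrt n}$, so $1 - u = -s/\sqrt n - s^2/(2n) + O(n^{-3/2})$; then $\sqrt{1-u} = i\sqrt{s/\sqrt n}\,\bigl(1 + O(n^{-1/2})\bigr)$ and $t = 1 - 2\sqrt{1-u} + 2(1-u) + O((1-u)^{3/2})$, giving expansions of $\log\bigl(\tfrac{2}{1+t}\bigr)$, $\log(1-t)$, and $-\log t$ as Puiseux series in $n^{-1/2}$ whose leading fractional powers must cancel. Taking $\log$ of $W_n(u)/n! = \bigl(\tfrac{2}{1+t}\bigr)^{n+1}(1-t)^{n+1}\tfrac{1}{n!}\sum_{a\ge1}a^n t^a$ and using $\tfrac1{n!}\sum_{a\ge1}a^n t^a \sim (-\log t)^{-(n+1)}$, the $n+1$ copies of $\log\bigl(\tfrac{2(1-t)}{-(1+t)\log t}\bigr)$ must be expanded to enough orders that, after multiplying by $n+1$, only the $O(1)$ term in $s$ survives with a finite limit; all the $n^{+1/2}, n^{+1/4}$ contributions should cancel, leaving $\tfrac{s^2}{45}$ (matching $\sigma^2/2 = \tfrac1{45}$), consistent with the mean $\tfrac{n-2}{3}$ and variance $\tfrac{2(n+1)}{45}$ already computed above.

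The main obstacle is exactly this bookkeeping of fractional powers of $n$: because $t - 1 \asymp n^{-1/4}$ rather than $n^{-1/2}$, the naive Taylor expansions produce terms of order $n^{3/4}, n^{1/2}, n^{1/4}$ in the exponent that are individually divergent, and one must verify they cancel to all orders down to $O(1)$ before the limit exists. A secondary technical point is justifying the asymptotic $\tfrac{1}{n!}\sum_{a\geq1} a^n t^a \sim (-\log t)^{-(n+1)}$ uniformly for $t$ in the relevant $n$-dependent complex neighborhood of $1$ (equivalently, controlling the polylogarithm $\mathrm{Li}_{-n}(t)$ near its singularity), which requires a careful saddle-point or Euler--Maclaurin estimate with explicit error bounds; the error terms here are what will ultimately be absorbed into the $o(1)$. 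Once these two estimates are in hand, the conclusion follows immediately from Theorem~\ref{theorem:curtiss}.
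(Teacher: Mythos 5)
Your overall strategy is the paper's: invoke the modified Curtiss theorem, combine the identity \eqref{eq:peakgen} with $A_n(t) = (1-t)^{n+1}\sum_{a\ge 1} a^n t^a$, invert the substitution $u = 4t/(1+t)^2$ near $u=1$, and expand $\log\bigl(\tfrac{2(1-t)}{(1+t)\log(1/t)}\bigr)$ far enough that, after multiplying by $n+1$, only $\tfrac{s^2}{45}$ survives. However, there is a genuine gap in the half of your plan where $s>0$, i.e.\ $u = e^{s/\sqrt{n}}>1$. For such $u$ the equation $4t/(1+t)^2=u$ has no real solution: the two roots of $ut^2+(2u-4)t+u=0$ are complex conjugates whose product is $1$, so $t=e^{i\theta}$ lies on the unit circle with $\theta \asymp n^{-1/4}$ (exactly as your formula $t=\tfrac{1-\sqrt{1-u}}{1+\sqrt{1-u}}$ shows). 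On the unit circle the series $\sum_{a\ge1}a^n t^a$ diverges -- its terms have modulus $a^n$ -- so the representation $A_n(t)=(1-t)^{n+1}\sum_{a\ge1}a^nt^a$ is simply not available there, and the proposed ``Laplace/Euler--Maclaurin estimate on $\sum a^n e^{-a(-\log t)}$'' does not apply: $-\log t$ is purely imaginary, the summand does not decay, and no sum-versus-integral comparison by positivity or monotonicity makes sense. To push your route through at positive $s$ you would have to estimate the analytic continuation of the polylogarithm, e.g.\ via $\mathrm{Li}_{-n}(e^{-w}) = n!\sum_{k\in\bZ}(w+2\pi i k)^{-(n+1)}$, and show the $k=0$ term dominates when $|w|\asymp n^{-1/4}$; this is doable but is a genuinely complex-analytic argument, not the ``secondary technical point'' of uniformity you describe.

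The clean fix is exactly what the modified Curtiss theorem (Theorem \ref{theorem:curtiss}) is for, and it is what the paper does: the theorem requires pointwise convergence of the m.g.f.'s only on some nonempty open set, not on a neighborhood of $0$. So fix $s>0$ and evaluate at $u=e^{-s/\sqrt{n}}<1$ (equivalently, prove convergence of $M_{X_n}$ on an open set of arguments of one sign). Then $t\in(0,1)$ is real, given by \eqref{eq:t_asymp}, and by positivity one has the elementary two-sided bound $t\cdot\tfrac{n!}{\log^{n+1}(1/t)} \le \sum_{a\ge1}a^nt^a \le t^{-1}\cdot\tfrac{n!}{\log^{n+1}(1/t)}$, i.e.\ $\sum_{a\ge1}a^nt^a = e^{\cO(\log t)}\,n!/\log^{n+1}(1/t)$ with $\log t = \cO(n^{-1/4})$. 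Your worry about divergent fractional powers then resolves itself: they cancel inside the single expansion $\log\bigl(\tfrac{2(1-t)}{(1+t)\log(1/t)}\bigr) = -\tfrac{s}{3\sqrt{n}} + \tfrac{s^2}{45n} + \cO(n^{-5/4})$, which gives $e^{s^2/45+\cO(n^{-1/4})}$ after multiplying by the centering factor; since $e^{s^2/45}$ is the (even) m.g.f.\ of $\cN(0,\tfrac{2}{45})$, convergence on this open half-line of arguments suffices to conclude.
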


\begin{proof}
    Let $X_n = \left( p(\pi_n) - \frac{n-2}{3} \right) / \sqrt{n}$ denote the normalized peaks. In view of Theorem \ref{theorem:curtiss}, it suffices to show that $M_{X_n}(s)$ converges pointwise to the m.g.f.\ of $\cN \left(0, \frac{2}{45}\right)$ on some open interval. Let $0 < t < 1$. By a simple comparison, it follows that
    \begin{align*}
    	t \cdot \frac{n!}{\log^{n+1}(1/t)}
    	=  \int_{0}^{\infty} a^{n} t^{a+1} \, \mathrm{d}a
    	\leq \sum_{a\geq 1} a^{n} t^{a}
    	\leq \int_{0}^{\infty} a^{n} t^{a-1} \, \mathrm{d}a
    	= \frac{1}{t} \cdot \frac{n!}{\log^{n+1}(1/t)}.
    \end{align*}
    Plugging this into \eqref{eq:peakgen}, we obtain
    \begin{align*}
        \frac{1}{n!} W_n\left(\frac{4t}{(1+t)^2}\right)
        = \frac{1}{n!} \left(\frac{2(1 - t)}{1+t}\right)^{n+1} \left( \sum_{a\geq 1} a^n t^a \right)
    	= e^{\mathcal{O}(\log t)} \left( \frac{2(1-t)}{(1+t)\log(1/t)} \right)^{n+1}.
    \end{align*}
    Now, fix $s > 0$ and choose $t$ as the unique solution of $\frac{4t}{(1+t)^2} = e^{-s/\sqrt{n}}$ in the range $(0, 1)$, which is given by
    \begin{align}
        \label{eq:t_asymp}
        t
        = \frac{1 - \sqrt{1 - e^{-s/\sqrt{n}}}}{1 + \sqrt{1 - e^{-s/\sqrt{n}}}}
        = 1 - \frac{2s^{1/2}}{n^{1/4}} + \frac{2 s}{n^{1/2}} - \frac{3 s^{3/2}}{2n^{3/4}} + \frac{s^2}{n} + \cO\left(n^{-5/4}\right),
    \end{align}
    where the implicit bound of the error term depends only on $s$. From this expansion, we have both $\log(t) = \cO\left(n^{-1/4}\right)$ and $\log\left(\frac{2(1-t)}{(1+t)\log(1/t)} \right) = -\frac{s}{3\sqrt{n}} + \frac{s^2}{45n} + \mathcal{O}(n^{-5/4}) $. Plugging these into $M_{X_n}(s)$, we see that
    \begin{align*}
        M_{X_n}(s)
        = \frac{1}{n!} W_n\left(e^{-s/\sqrt{n}}\right)  e^{\frac{n+1}{3\sqrt{n}}s}
        = e^{\frac{s^2}{45} + \cO\left(n^{-1/4}\right)}.
    \end{align*}
    The desired conclusion follows since $e^{s^2/45}$ is the m.g.f.\ of the $\cN\left( 0, \frac{2}{45} \right)$.
\end{proof}

\section{Central limit theorem for peaks of a random permutation in a fixed conjugacy class of $S_n$}

Let $\cC_{\lambda}$ denote the set of all permutations of $S_n$ of cycle type $\lambda = 1^{\parti_1} 2^{\parti_2} \cdots $ of $n$. Recall that the peak generating function over $\cC_{\lambda}$ has an explicit formula \eqref{eq:peakgen_cycletype}, which involves the quantity~$f_{a,i}$ defined in the introduction. Along the proof of the main theorem, it is important to know a precise estimation of $f_{a,i}$. Define $g_{a,i}$ by the following relation
\begin{align*}
    f_{a,i} = \frac{(2a)^i}{2i} g_{a,i}.
\end{align*}
The main reason for introducing $g_{a,i}$ is that $f_{a,i}$ is expected to behave much like $(2a)^i/(2i)$, and so, it is necessary to study the relative difference and produce a precise estimate for the difference. The following lemma serves this purpose.

\begin{lemma}
    \label{lemma:fai_est}
	There exists a universal constant $c_1 > 0$ such that
	\begin{align*}
		e^{-c_1 (2a)^{-2i/3}}
		\leq g_{a,i}
		\leq e^{c_1 (2a)^{-2i/3}}
	\end{align*}
	for all $a \geq 1$ and $ i \geq 1$. Consequently, we have $e^{-(c_1/4) / a^2} \leq g_{a,i} \leq e^{(c_1/4) / a^2}$.
\end{lemma}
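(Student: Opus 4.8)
The idea is simply to expand $g_{a,i}$ from its definition and isolate the leading term. Unwinding $f_{a,i} = \frac{(2a)^i}{2i} g_{a,i}$ gives
\[
  g_{a,i} \;=\; \frac{2i\,f_{a,i}}{(2a)^i} \;=\; \sum_{\substack{d \mid i \\ d \text{ odd}}} \mu(d)\,(2a)^{-i(1 - 1/d)} \;=\; 1 + \theta_{a,i}, \qquad \theta_{a,i} := \sum_{\substack{d \mid i,\ d \text{ odd} \\ d \geq 3}} \mu(d)\,(2a)^{-i(1 - 1/d)},
\]
where the isolated term is $d = 1$. The whole lemma then reduces to two estimates on the tail $\theta_{a,i}$: a quantitative one, $|\theta_{a,i}| \leq 2(2a)^{-2i/3}$, and a crude one keeping $g_{a,i}$ away from $0$.

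For the quantitative bound I would use $|\mu(d)| \leq 1$ and note that as $d$ runs over the odd divisors of $i$ with $d \geq 3$, the exponents $i - i/d$ are \emph{distinct} nonnegative integers, each at least $\lceil 2i/3\rceil$ (since $i/d \leq i/3$). Hence, using $2a \geq 2$ so that $1 - (2a)^{-1} \geq \tfrac12$,
\[
  |\theta_{a,i}| \;\leq\; \sum_{m \geq \lceil 2i/3 \rceil} (2a)^{-m} \;=\; \frac{(2a)^{-\lceil 2i/3\rceil}}{1 - (2a)^{-1}} \;\leq\; 2\,(2a)^{-2i/3}.
\]
For the crude bound, observe that $\theta_{a,i} \neq 0$ forces $i$ to have an odd divisor $\geq 3$, hence $i \geq 3$; in that case $(2a)^{-2i/3} \leq (2a)^{-2} \leq \tfrac14$, so $|\theta_{a,i}| \leq \tfrac12$ (and if $\theta_{a,i} = 0$ there is nothing to check). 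Thus $g_{a,i} = 1 + \theta_{a,i} \in [\tfrac12, \tfrac32]$, so $\log g_{a,i}$ is well defined, and the elementary inequality $|\log(1+x)| \leq 2|x|$ valid for $|x| \leq \tfrac12$ gives $|\log g_{a,i}| \leq 2|\theta_{a,i}| \leq 4(2a)^{-2i/3}$. This is exactly the asserted two-sided bound with $c_1 = 4$.

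The ``consequently'' clause then follows by splitting on $i$: if $i \in \{1,2\}$ the only odd divisor of $i$ is $1$, so $g_{a,i} = 1$ and the bound is trivial; if $i \geq 3$ then $(2a)^{-2i/3} \leq (2a)^{-2} = \tfrac{1}{4a^2}$, hence $c_1(2a)^{-2i/3} \leq \tfrac{c_1}{4}\cdot\tfrac{1}{a^2}$. The only step needing any care is the crude lower bound on $g_{a,i}$: a priori, if $i$ were small while possessing small odd divisors, $g_{a,i}$ could be near $0$ and $\log g_{a,i}$ uncontrolled. This never happens, because an odd divisor $\geq 3$ forces $i \geq 3$ — precisely the regime in which the geometric estimate already shows $|\theta_{a,i}|$ is small — so there is no genuine obstacle, only this bookkeeping point to get right.
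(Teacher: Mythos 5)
Your proof is correct and takes essentially the same route as the paper's: isolate the $d=1$ term, bound the remaining sum over odd divisors $d\geq 3$ by a geometric series (the factor $2$ coming from $2a\geq 2$) to get $|g_{a,i}-1|\leq 2(2a)^{-2i/3}$ with the tail nonempty only when $i\geq 3$, and then convert to the two-sided exponential bound via $|\log(1+x)|\leq 2|x|$ on $|x|\leq \tfrac12$, which is exactly the paper's pair of inequalities $e^{-2x}\leq 1-x$ and $1+x\leq e^{2x}$, yielding the same constant $c_1=4$. The ``consequently'' clause is handled just as in the paper, via $(2a)^{-2i/3}\leq (2a)^{-2}$ for $i\geq 3$ and $g_{a,i}=1$ for $i\in\{1,2\}$.
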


Although the intermediate step of the proof will show that the explicit choice $c_1 = 4$ works, we prefer to leave it as a named constant. This is because its value is not important for the argument and its presence will clarify the way we utilize this lemma.

\begin{proof}
	Recall that $f_{a,i} = \frac{1}{2i} \sum \mu(d) (2a)^{i/d}$, where the sum is over $d$, the positive odd divisors of $i$. From this, we see that $g_{a,i} = 1$ when $i$ is either $1$ or $2$, and so, it suffices to assume that $i \geq 3$. For such $i \geq 3$,
	\begin{align*}
		(2a)^i \left| g_{a,i} - 1 \right|
		\leq \sum_{\substack{ d \mid i \\ d \text{ odd, } d \neq 1 }} (2a)^{i/d}
		\leq \sum_{k=1}^{\lfloor i/3 \rfloor} (2a)^k
		= \frac{2a}{2a-1} \left( (2a)^{\lfloor i/3 \rfloor} - 1 \right)
		\leq 2 (2a)^{i/3}.
	\end{align*}
	Rearranging, it follows that
	\begin{align*}
	    1 - 2(2a)^{-2i/3} \leq g_{a,i} \leq 1 + 2(2a)^{-2i/3}.
	\end{align*}
	Since $a \geq 1$ and $i \geq 3$, we have $2(2a)^{-2i/3} \leq \frac{1}{2}$. Then, applying the inequalities $e^{-2x} \leq 1-x$ and $ 1+x \leq e^{2x}$, which are valid for $0 \leq x \leq \frac{1}{2}$, proves the claim with the choice $c_1 = 4$. The remaining assertion is a simple consequence of the fact that $(2a)^{-2i/3} \leq a^{-2}$ for $i \geq 3$.
\end{proof}

\begin{remark}
    The quantity $f_{a,i}$ is a positive integer. In the special case when $a$ is a power of $2$, this follows from Lemma 1.3.16 of \cite{FNP}, which enumerates monic, irreducible, self-conjugate polynomials of degree $2i$ over a finite field of size $2a$.
    
    For general $a$, the quantity $f_{a,i}$ enumerates what Victor Reiner calls ``nowhere-zero primitive twisted necklaces'' with values in
    \begin{align*}
        A = \{+1,-1,+2,-2,\cdots,+a,-a \}
    \end{align*}
    having $i$ entries. To define this notion, let the cyclic group $C_{2i}$ act on $i$-tuples of words $(b_1,\cdots,b_i)$ where the $b_k$'s take values in $A$, and the generator of $C_{2i}$ acts by
    \begin{align*}
        g(b_1,\cdots,b_i) = (b_2,\cdots,b_i,-b_1).
    \end{align*}
    An orbit $P$ of this action is called a twisted necklace, and $P$ primitive means that the $C_{2i}$ action is free (i.e. no non-trivial group element fixes any vector in the orbit $P$). Arguing as in the proof of Theorem 4.2 of \cite{Reiner} shows that $f_{a,i}$ does indeed enumerate nowhere-zero primitive twisted necklaces. We thank Victor Reiner for this observation.
\end{remark}

\subsection{Heuristics and main idea}

We begin by focusing on the product of coefficients appearing in the formula of the peak generating function \eqref{eq:peakgen_cycletype}. More specifically, we seek to find a formula of each coefficient that is more manageable for estimation. Applying the generalized binomial theorem to expand the function, we get
\begin{align}
	[x_{i}^{\parti_i}] \left( \frac{1+x_i}{1-x_i} \right)^{f_{a,i}}
	&= [x_{i}^{\parti_i}] \left( \left( 1+x_i \right)^{f_{a,i}} \left( 1-x_i \right)^{-f_{a,i}} \right) \nonumber \\
	&= \sum_{k=0}^{\infty} \binom{f_{a,i}}{k} \binom{f_{a,i} - 1 + \parti_i - k}{f_{a,i} - 1}
	 = \frac{(2f_{a,i})^{\parti_i}}{\parti_i!} \mK_{a,i}, \label{eq:kai}
\end{align}
where $\mK_{a,i}$ is defined by
\begin{align*}
    \mK_{a,i}
    = \sum_{\nu = 0}^{\parti_i} \frac{1}{2^{\parti_i}} \binom{\parti_i}{\nu} \frac{(f_{a,i} - \nu + \parti_i - 1)!}{(f_{a,i} - \nu)! f_{a,i}^{\parti_i - 1}}.
\end{align*}

To apply \eqref{eq:kai}, note that the term $t^a \prod_i [x_{i}^{\parti_i}] \left( (1+x_i)/(1-x_i) \right)^{f_{a,i}}$ in \eqref{eq:peakgen_cycletype} appears to contribute to the sum meaningfully only when $a$ is comparable to $n^{5/4}$. Also, the $\mK_{a,i}$'s are approximately $1$ if $f_{a,i}$ is considerably larger than $\parti_i$. If all these observations get along, one may argue heuristically that
\begin{align*}
	\frac{1}{|\cC_{\lambda}|}\sum_{\pi \in \cC_{\lambda}} \left( \frac{4t}{(1+t)^2} \right)^{p(\pi) + 1}
	&\stackrel{?}{\approx} \left( \frac{\prod_i \parti_i! i^{\parti_i}}{n!} \right) \cdot 2 \left( \frac{1-t}{1+t} \right)^{n+1}
	\int_{0}^{\infty} t^{x} \prod_{i} \frac{\left( (2x)^{i} / i \right)^{\parti_i}}{\parti_i!} \, \mathrm{d}x \\
	&= \frac{1}{n!} \left( \frac{2(1-t)}{1+t} \right)^{n+1} \int_{0}^{\infty} t^x x^n \, \mathrm{d}x \\
	&= \left( \frac{2(1-t)}{(1+t)\log(1/t)} \right)^{n+1}.
\end{align*}

The final result is the same as what appears in the proof of the asymptotic normality of peaks over $S_n$. This leads to a naive guess that the peaks over $\cC_{\lambda}$ have asymptotically the same normal distribution as the peaks over $S_n$. Of course, we must test the validity of this claim. One main concern is that the alleged asymptotic behavior of \eqref{eq:kai} may not be valid for small $i$'s. Such phenomenon is already observed in the case of descents \cite{KimLee}, where the asymptotic distribution of descents for a fixed cycle type is parametrized by the density of fixed points. And indeed, we will find that corrections are also needed for the peak distribution due to the presence of fixed points. In summary, we need to
\begin{itemize}
    \item precisely control error terms appearing in various approximations, and
    
    \item investigate how the presence of fixed points affects the asymptotic formula for the peak generating function.
\end{itemize}

From this point forward, let $s > 0$ be a fixed positive real number. Then, $t$ is chosen as in \eqref{eq:t_asymp}, which is the unique solution of $\frac{4t}{(1+t)^2} = e^{-s/\sqrt{n}}$ in the interval $(0, 1)$. As the first step of rigorization, we mimic the heuristic computation without using approximations. Applying \eqref{eq:kai} to the peak generating function \eqref{eq:peakgen_cycletype}, we get
\begin{align*}
	\frac{1}{|\cC_{\lambda}|}\sum_{\pi \in \cC_{\lambda}} e^{-\frac{s}{\sqrt{n}}(p(\pi) + 1)}
	&= \frac{2}{n!} \left( \frac{1-t}{1+t} \right)^{n+1}
	\sum_{a \geq 1} t^{a} \prod_{1 \leq i \leq n} \parti_i! i^{\parti_i}  [x_{i}^{\parti_i}] \left( \frac{1+x_i}{1-x_i} \right)^{f_{a,i}} \\
	&= \frac{2}{n!} \left( \frac{1-t}{1+t} \right)^{n+1}
	\sum_{a \geq 1} t^{a} \prod_{1 \leq i \leq n}  (2a)^{i\parti_i} g_{a,i}^{\parti_i} \mK_{a,i} \\
	&= \left( \frac{2(1-t)}{(1+t)\log(1/t)} \right)^{n+1}
	\left[ \frac{\log^{n+1}(1/t)}{n!} \sum_{a \geq 1} a^n t^{a} \prod_{1 \leq i \leq n} g_{a,i}^{\parti_i} \mK_{a,i} \right].
\end{align*}
For the sake of conciseness, define $\mL_{\bullet}$ by
\begin{align*}
    \mL_{A} := \frac{\log^{n+1}(1/t)}{n!} \sum_{a \in A \cap \bN} a^n t^{a} \prod_{1 \leq i \leq n} g_{a,i}^{\parti_i} \mK_{a,i}
\end{align*}
for all $A \subseteq \mathbb{R}$. Then, the above computation simplifies to
\begin{align}
    \label{eq:peakgen_cycletype_simple}
    \frac{1}{|\cC_{\lambda}|}\sum_{\pi \in \cC_{\lambda}} e^{-\frac{s}{\sqrt{n}}(p(\pi) + 1)}
    = \left( \frac{2(1-t)}{(1+t)\log(1/t)} \right)^{n+1} \mL_{[1,\infty)}.
\end{align}
As in the heuristic computation, $\mL_{\bullet}$ will be approximated by its integral analogue. In doing so, it is convenient to split the sum into two parts at a certain threshold. The primary reason is that the aforementioned approximation tends to fail for small $a$, and so, such case deserves to be handled separately. To describe this threshold, let
\begin{align}
    \label{eq:delta}
    \delta_0 = \left[ \sup_{n \geq 1} \left( n^{1/4} \log(1/t) e^{(c_1/4)+1} \right) \right]^{-1}
\end{align}
and fix any $\delta \in (0, \delta_0)$. In view of \eqref{eq:t_asymp}, $\log(1/t) = 2\sqrt{s}n^{-1/4} + \cO(n^{-3/4})$ for large~$n$. This guarantees that $\delta_0$ is away from $0$, and so, the choice of $\delta$ does make sense. Then, the sum~$\mL_{[1,\infty)}$ will be split into $\mL_{[1, \delta n^{5/4}]} + \mL_{(\delta n^{5/4},\infty)}$, and we will call the former term the \emph{small range} and the latter term the \emph{large range}.

\subsection{Estimation of small range}

We will focus on the range $a \leq \delta n^{5/4}$, where $\delta$ will be chosen from $(0, \delta_0)$. The main goal in this section is to show that the contribution arising from this range is negligible. The precise statement is as follows.

\begin{lemma}
    \label{lemma:small_range}
    For each $\delta \in (0, \delta_0)$ and $\rho \in (\delta/\delta_0, 1)$, there exists a constant $c_3 = c_3(\delta, \rho) > 0$, depending only on $\delta$ and $\rho$, such that
    \begin{align*}
        \mL_{[0, \delta n^{5/4}]} \leq c_3 \rho^{n+1}.
    \end{align*}
\end{lemma}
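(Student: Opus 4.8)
The plan is to bound $\mL_{[0,\delta n^{5/4}]}$ term-by-term and then sum a geometric-type series. Recall that
\begin{align*}
    \mL_{[0,\delta n^{5/4}]} = \frac{\log^{n+1}(1/t)}{n!} \sum_{a \leq \delta n^{5/4}} a^n t^a \prod_{1 \leq i \leq n} g_{a,i}^{\parti_i} \mK_{a,i}.
\end{align*}
First I would dispose of the two families of auxiliary factors. By Lemma~\ref{lemma:fai_est} we have $\prod_i g_{a,i}^{\parti_i} \leq \prod_i e^{(c_1/4)\parti_i/a^2} \leq e^{(c_1/4)n/a^2}$, and since we are in the range $a \geq 1$ this is at most $e^{(c_1/4)n}$; more usefully, for the relevant $a$ it contributes a factor controlled by $\exp\{(c_1/4)n/a^2\}$ which we keep. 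For the $\mK_{a,i}$ factors, I would show $\mK_{a,i} \leq$ (something like) $e^{O(\parti_i/f_{a,i})}$ or simply a crude bound such as $\mK_{a,i} \leq 2^{\parti_i}$ or a polynomial bound, using the defining sum $\mK_{a,i} = \sum_{\nu=0}^{\parti_i} 2^{-\parti_i}\binom{\parti_i}{\nu}\frac{(f_{a,i}-\nu+\parti_i-1)!}{(f_{a,i}-\nu)!\,f_{a,i}^{\parti_i-1}}$ together with $f_{a,i} \geq a$ (since $f_{a,1} = a$ and in general $f_{a,i}$ is a positive integer $\geq 1$, and one has $f_{a,i}\sim (2a)^i/(2i)$ which is large). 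The cleanest route is: each ratio $\frac{(f_{a,i}-\nu+\parti_i-1)!}{(f_{a,i}-\nu)!}\leq (f_{a,i}+\parti_i)^{\parti_i-1}$, so $\mK_{a,i}\leq (1+\parti_i/f_{a,i})^{\parti_i-1}\leq e^{\parti_i^2/f_{a,i}}$, and summing over $i$ and using $f_{a,i}\geq a$, $\sum_i \parti_i^2 \leq n^2$, gives $\prod_i \mK_{a,i} \leq e^{O(n^2/a)}$ — but this is only good for $a$ large. For the genuinely small $a$ I expect one needs the sharper input that $f_{a,i}$ grows geometrically in $i$, so $\sum_i \parti_i^2/f_{a,i}$ is actually $O(n)$ or better uniformly; this is the technical heart.

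Next I would handle the dominant factor $\frac{\log^{n+1}(1/t)}{n!}a^n t^a$. Writing $t = e^{-\log(1/t)}$ and optimizing, $a^n t^a = a^n e^{-a\log(1/t)}$ is maximized near $a = n/\log(1/t)$, and by the integral comparison already used in the proof of Theorem~\ref{thm:clt_peaks_Sn} one has $\sum_{a\geq 1} a^n t^a \leq \frac{1}{t}\cdot\frac{n!}{\log^{n+1}(1/t)}$, i.e. $\mL_{[0,\infty)}$-type sums are $O(1)$. The point of restricting to $a \leq \delta n^{5/4}$ is that, since $\log(1/t) = 2\sqrt{s}\,n^{-1/4}+O(n^{-3/4})$, the maximizing value $n/\log(1/t) \approx n^{5/4}/(2\sqrt s)$, and the threshold $\delta n^{5/4}$ with $\delta < \delta_0$ lies strictly to the left of (a constant multiple of) that maximum. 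So on this truncated range the summand $\frac{\log^{n+1}(1/t)}{n!}a^n t^a$, evaluated at its right endpoint $a = \delta n^{5/4}$, is exponentially small: I would show it is $\leq (\rho')^{n+1}$ for some $\rho' < 1$ determined by $\delta/\delta_0$, using Stirling $n! \sim \sqrt{2\pi n}(n/e)^n$ to write $\frac{\log^{n+1}(1/t)}{n!}(\delta n^{5/4})^n = \exp\{(n+1)\log[\,e\,\delta\, n^{5/4}\log(1/t)/n\,] + O(\log n)\}$ and observing $e\,\delta\, n^{1/4}\log(1/t) \leq e\,\delta\,\sup_n(n^{1/4}\log(1/t)) < e\,\delta_0^{-1}\cdot e^{-(c_1/4)-1}\cdot\delta\cdot(\text{stuff})$; unwinding the definition \eqref{eq:delta} of $\delta_0$, the product $e^{(c_1/4)+1}\,n^{1/4}\log(1/t)\,\delta \leq \delta/\delta_0 < \rho$, which is exactly why $\delta_0$ was defined with the factor $e^{(c_1/4)+1}$ — it simultaneously absorbs the $e$ from Stirling and the $e^{c_1/4}$ from the $g_{a,i}$ bound. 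So each individual summand on $[1,\delta n^{5/4}]$ is $\leq \rho^{n+1}$ after the $g$-factor is folded in.

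Finally I would sum: the number of terms is at most $\delta n^{5/4}+1$, so $\mL_{[0,\delta n^{5/4}]} \leq (\delta n^{5/4}+1)\rho^{n+1}\cdot(\text{bound on }\prod_i\mK_{a,i})$. If the $\mK$-product contributes only $e^{o(n)}$ uniformly on the range — which is what the monotonicity/geometric-growth of $f_{a,i}$ should give, since the worst case is the smallest $a$, namely $a=1$, where $f_{1,i}$ still grows like $2^i/(2i)$ so $\sum_i\parti_i^2/f_{1,i}$ is bounded by a constant times $n$ — then $(\delta n^{5/4}+1)e^{o(n)}\rho^{n+1}$ can be reabsorbed into $c_3\rho'^{n+1}$ for any $\rho' \in (\rho,1)$, or better one slightly shrinks $\rho$ at the start. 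Choosing the constant $c_3 = c_3(\delta,\rho)$ to swallow the polynomial prefactor and the (bounded) $e^{o(n)}$ correction yields $\mL_{[0,\delta n^{5/4}]}\leq c_3\rho^{n+1}$, as claimed.

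\medskip

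\noindent\textbf{Main obstacle.} The delicate point is the uniform control of $\prod_{i} g_{a,i}^{\parti_i}\mK_{a,i}$ over the \emph{entire} small range down to $a=1$: the naive bounds on $\mK_{a,i}$ degrade badly when $a$ and $f_{a,i}$ are small (e.g. $i=1$, where $f_{a,1}=a$ can be as small as $1$ while $\parti_1$ can be of order $n$), so one must exploit either that $\parti_1$ small forces the correction small, or a more careful expansion of $\mK_{a,i}$ in powers of $\parti_i/f_{a,i}$, controlling the tail. Getting this estimate to be $e^{o(n)}$ — rather than $e^{\Theta(n)}$, which would destroy the geometric decay — uniformly in $a \in [1,\delta n^{5/4}]$ is where the real work lies; everything else is Stirling's formula and a geometric series.
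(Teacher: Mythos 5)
Your overall architecture (bound the unperturbed factor $\tfrac{\log^{n+1}(1/t)}{n!}a^n t^a$ at the right endpoint of the truncated range via Stirling and the definition \eqref{eq:delta} of $\delta_0$, absorb the $g_{a,i}$ factor $e^{(c_1/4)n}$ into the $e^{(c_1/4)+1}$ built into $\delta_0$, then take a union bound over the $\leq \delta n^{5/4}$ terms) matches the paper's proof. But there is a genuine gap exactly at the point you flag as the ``technical heart'': your plan requires $\prod_i \mK_{a,i} \leq e^{o(n)}$ \emph{uniformly} for $a \in [1,\delta n^{5/4}]$, and this is false. Take $\lambda = 1^n$ and $a=1$: then $f_{1,1}=1$, only $\nu\in\{0,1\}$ contribute, and $\mK_{1,1} = 2^{-n}\left( n! + n\cdot (n-1)! \right) = 2\,n!/2^n$, which is of size $e^{\Theta(n\log n)}$, far beyond $e^{o(n)}$ and indeed beyond any $e^{Cn}$. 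Your hoped-for estimate $\sum_i \parti_i^2/f_{a,i} = \cO(n)$ also fails in this example ($\parti_1^2/f_{1,1}=n^2$), and the geometric growth of $f_{a,i}$ in $i$ cannot rescue the $i=1$ term, which is the problematic one. So the term-by-term bound you propose, with the $\mK$-product treated as a benign $e^{o(n)}$ correction, does not close.

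The paper's resolution (Lemma \ref{lemma:kai_asymp_small}) is structurally different at this point: rather than an $a$-independent bound, one proves
\begin{align*}
    \prod_{1 \leq i \leq n} \mK_{a,i} \leq \left( \frac{\delta n^{5/4}}{a} \right)^{n} e^{c_2 n^{3/4}},
\end{align*}
by writing $\mK_{a,i} \leq (1+\parti_i/f_{a,i})^{\parti_i}$ and, with $r = \delta n^{5/4}/a \geq 1$, factoring out $r^{\parti_1}$ (resp.\ $r^{i\parti_i}$ for $i \geq 2$) so that the leftover exponent becomes $\parti_1^2/(ra) \leq n^{3/4}/\delta$ (resp.\ $\cO(i\parti_i/n^{3/2})$). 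The blow-up of $\mK_{a,1}$ for small $a$ is thus not avoided but packaged into the factor $(\delta n^{5/4}/a)^n$, whose $a$-dependence exactly cancels the $a^n$ in the summand, giving $a^n\prod_i \mK_{a,i} \leq (\delta n^{5/4})^n e^{c_2 n^{3/4}}$ uniformly; after that, $t^a \leq 1$, Stirling, and $\delta/(\delta_0\rho)<1$ finish as you envisaged. In your $\lambda = 1^n$, $a=1$ example this is visible directly: $\mK_{1,1}\approx 2\,n!/2^n$ is enormous, but $a^n=1$, and the compensation $(\delta n^{5/4}/1)^n$ is what makes the uniform bound true. To repair your write-up you would need to replace the target ``$\prod_i\mK_{a,i}=e^{o(n)}$ uniformly'' by this compensated bound (or equivalently bound $a^n\prod_i\mK_{a,i}$ jointly rather than the two factors separately).
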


We begin by producing a simple upper bound for the product of the $\mK_{a,i}$'s.

\begin{lemma}
    \label{lemma:kai_asymp_small}
    Let $\delta > 0$. Then, there exists a constant $c_2 = c_2(\delta) > 0$, depending only on $\delta$, such that
    \begin{align}
        \prod_{1 \leq i \leq n} \mK_{a,i} \leq \left( \frac{\delta n^{5/4}}{a} \right)^{n} e^{c_2 n^{3/4}}
    \end{align}
    whenever $a \leq \delta n^{5/4}$ holds.
\end{lemma}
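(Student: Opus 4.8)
The plan is to bound each factor $\mK_{a,i}$ pointwise, substitute the estimate for $f_{a,i}$ from Lemma~\ref{lemma:fai_est}, and then distribute the target quantity $(\delta n^{5/4}/a)^n$ over the index $i$ by means of the identity $\sum_i i\parti_i = n$, so that whatever is left over is absorbed into the factor $e^{c_2 n^{3/4}}$.

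\emph{A pointwise bound on $\mK_{a,i}$.} First I would observe that the coefficients of $\frac{1+x}{1-x} = 1 + 2x + 2x^2 + \cdots$ are nonnegative and dominated term by term by those of $(1-x)^{-2} = 1 + 2x + 3x^2 + \cdots$. Since $f_{a,i}$ is a positive integer (as recorded in the Remark above), raising to the power $f_{a,i}$ preserves this coefficient-wise domination, and feeding the resulting inequality $[x_i^{\parti_i}]((1+x_i)/(1-x_i))^{f_{a,i}} \leq [x_i^{\parti_i}](1-x_i)^{-2f_{a,i}} = \binom{2f_{a,i}+\parti_i-1}{\parti_i}$ into \eqref{eq:kai} gives
\begin{align*}
    \mK_{a,i}
    &= \frac{\parti_i!}{(2f_{a,i})^{\parti_i}}\,[x_i^{\parti_i}]\left(\frac{1+x_i}{1-x_i}\right)^{f_{a,i}}
    \leq \frac{\parti_i!}{(2f_{a,i})^{\parti_i}}\binom{2f_{a,i}+\parti_i-1}{\parti_i} \\
    &= \prod_{j=0}^{\parti_i-1}\left(1+\frac{j}{2f_{a,i}}\right)
    \leq \left(1+\frac{\parti_i}{2f_{a,i}}\right)^{\parti_i}.
\end{align*}
Then, since $a\geq1$, Lemma~\ref{lemma:fai_est} gives $g_{a,i}\geq e^{-c_1/4}$, so that $2f_{a,i} = (2a)^i g_{a,i}/i \geq (2a)^i e^{-c_1/4}/i$; combining this with $i\parti_i \leq \sum_j j\parti_j = n$ yields
\begin{align*}
    \mK_{a,i}
    \;\leq\; \left(1+\frac{i\parti_i e^{c_1/4}}{(2a)^i}\right)^{\parti_i}
    \;\leq\; \left(1+\frac{n e^{c_1/4}}{(2a)^i}\right)^{\parti_i}.
\end{align*}

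\emph{Extracting the target power and bounding the error.} The essential step is that $a\leq\delta n^{5/4}$ forces $1\leq(\delta n^{5/4}/a)^i$, hence
\begin{align*}
    1+\frac{n e^{c_1/4}}{(2a)^i}
    \;\leq\; \left(\frac{\delta n^{5/4}}{a}\right)^{i} + \frac{n e^{c_1/4}}{(2a)^i}
    \;=\; \left(\frac{\delta n^{5/4}}{a}\right)^{i}\left(1+\frac{n e^{c_1/4}}{(2\delta n^{5/4})^{i}}\right),
\end{align*}
which trades the denominator $(2a)^i$ --- small and troublesome when $a$ is small --- for the harmless $(2\delta n^{5/4})^i$, at exactly the price of the factor we are permitted to produce. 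Raising to the power $\parti_i$, using $1+x\leq e^x$, and multiplying over $1\leq i\leq n$ (the factors with $\parti_i=0$ equal $1$), I obtain from $\sum_i i\parti_i = n$ that
\begin{align*}
    \prod_{1\leq i\leq n}\mK_{a,i}
    \;\leq\; \left(\frac{\delta n^{5/4}}{a}\right)^{n}\exp\left(n e^{c_1/4}\sum_{i\geq1}\frac{\parti_i}{(2\delta n^{5/4})^{i}}\right).
\end{align*}
It remains to see that the exponent is $\cO(n^{3/4})$. The statement is vacuous unless $\delta n^{5/4}\geq1$, so $\frac{1}{2\delta n^{5/4}}\leq\frac12$; then, using $\parti_i\leq n/i$, the identity $\sum_{i\geq1}x^i/i = -\log(1-x)$, and $-\log(1-x)\leq2x$ for $0\leq x\leq\frac12$,
\begin{align*}
    \sum_{i\geq1}\frac{\parti_i}{(2\delta n^{5/4})^{i}}
    \;\leq\; -n\log\left(1-\frac{1}{2\delta n^{5/4}}\right)
    \;\leq\; \frac{1}{\delta}\,n^{-1/4},
\end{align*}
so the exponent is at most $\frac{e^{c_1/4}}{\delta}n^{3/4}$ and the lemma holds with $c_2 = e^{c_1/4}/\delta$.

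\emph{The main obstacle.} The delicate point is the extraction step: one must resist applying $1+x\leq e^x$ too early, since that would replace $\mK_{a,i}$ by $e^{\parti_i^2/(2f_{a,i})}$, whose product over $i$ has logarithm of order $n^2/a$ --- hopelessly larger than $\log((\delta n^{5/4}/a)^n)\approx\frac{5}{4}n\log n$ when $a$ is small. Peeling off the full power $(\delta n^{5/4}/a)^{i\parti_i}$ first leaves a genuinely geometric tail $\sum_{i}\parti_i(2\delta n^{5/4})^{-i}$, which is only $\cO(n^{-1/4})$ even after multiplication by $n$, so that the correction stays at order $n^{3/4}$. The remaining ingredients --- the coefficient-wise domination, the use of Lemma~\ref{lemma:fai_est}, and the geometric-series bound --- are routine.
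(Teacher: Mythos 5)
Your proof is correct, and its engine is the same as the paper's: bound $\mK_{a,i}$ by $\bigl(1+\cO(i\parti_i/(2a)^i)\bigr)^{\parti_i}$, peel off the factor $(\delta n^{5/4}/a)^{i\parti_i}$ using $\delta n^{5/4}/a \ge 1$ \emph{before} invoking $1+x\le e^x$, and then sum the residual exponents over $i$ (your closing remark about why premature exponentiation fails is exactly the point of the paper's argument too). The one place you genuinely deviate is the derivation of the pointwise bound: you get $\mK_{a,i}\le\bigl(1+\parti_i/(2f_{a,i})\bigr)^{\parti_i}$ by coefficient-wise domination $\bigl(\tfrac{1+x}{1-x}\bigr)^{f_{a,i}}\preceq(1-x)^{-2f_{a,i}}$, which requires $f_{a,i}$ to be a positive integer --- a fact the paper only asserts in a remark (citing Reiner) and never uses in its own proof; the paper instead bounds each term of the $\nu$-sum defining $\mK_{a,i}$ by $\prod_{k=1}^{\parti_i-1}\bigl(1+\tfrac{k-\nu}{f_{a,i}}\bigr)\le\bigl(1+\parti_i/f_{a,i}\bigr)^{\parti_i}$, which is integrality-free (and is the route you could substitute if you prefer not to lean on the remark). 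The other differences are cosmetic: you treat all $i\ge1$ uniformly via $g_{a,i}\ge e^{-c_1/4}$ and close with $\parti_i\le n/i$ and the logarithmic series, whereas the paper splits $i=1$ (where $f_{a,1}=a$ exactly) from $i\ge2$ and uses $\sum_{i\ge2}i\parti_i=n-\parti_1$; both yield an admissible $c_2(\delta)$.
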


\begin{proof}
    Assume that $a \leq \delta n^{5/4}$. If $0 \leq \nu \leq \parti_i$, then
    \begin{align*}
        \frac{(f_{a,i} - \nu + \parti_i - 1)!}{(f_{a,i} - \nu)! f_{a,i}^{\parti_i - 1}}
        &= \prod_{k=1}^{\parti_i - 1} \left( 1 + \frac{k - \nu}{f_{a,i}} \right)
        \leq \left( 1 + \frac{\parti_i}{f_{a,i}} \right)^{\parti_i}.
    \end{align*}
    Plugging this to the definition of $\mK_{a,i}$, we obtain $\mK_{a,i} \leq \left(1 + (\parti_i / f_{a,i}) \right)^{\parti_i}$. This bound will be further simplified depending on whether $i = 1$ or $i \geq 2$. For the sake of brevity, we write $r = \delta n^{5/4} / a$. By assumption, we have $r \geq 1$. Now, when $i = 1$, plug $f_{a,1} = a$ and proceed as
    \begin{align*}
    	\mK_{a,1}
    	\leq \left( 1 + \frac{\parti_1}{a} \right)^{\parti_1}
    	\leq r^{\parti_1} \left( 1 + \frac{\parti_1}{ra} \right)^{\parti_1}
    	\leq r^{\parti_1} e^{\parti_1^2 / ra }
    	\leq r^{\parti_1} e^{(1/\delta) n^{3/4}}.
    \end{align*}
    In the third and fourth steps, inequalities $1+x \leq e^x$ and $\parti_1 \leq n$ are utilized, respectively. Likewise, when~$i \geq 2$, we apply Lemma \ref{lemma:fai_est} and proceed as in the previous case to get
    \begin{align*}
    	\mK_{a,i}
    	\leq \left( 1 + 2e^{c_1} \frac{i\parti_i}{(2a)^i} \right)^{\parti_i}
        \leq r^{i\parti_i} \left( 1 + 2e^{c_1} \frac{i\parti_i}{(2ra)^i} \right)^{\parti_i}
    	\leq r^{i\parti_i} e^{2e^{c_1} i\parti_i^2 / (2ra)^i }
    	\leq r^{i\parti_i} e^{(e^{c_1}/\delta^2) i\parti_i / n^{3/2} }.
    \end{align*}
    In the third step, the obvious inequality $i\parti_i \leq n$ is used. Combining altogether and utilizing the identity $\sum_{i \geq 2} i \parti_i = n - \parti_1$, we see that
    \begin{align*}
    	\prod_{1 \leq i \leq n} \mK_{a,i}
    	\leq \left( r^{\parti_1} e^{(1/\delta) n^{3/4}} \right) \left( r e^{(e^{c_1}/\delta^2) / n^{3/2} } \right)^{n-\parti_1}
    	\leq r^{n} e^{c_2 n^{3/4}},
    \end{align*}
    where $c_2$ can be chosen as $c_2 = (1/\delta) + (e^{c_1}/\delta^2)$.
\end{proof}

\begin{proof}[Proof of Lemma \ref{lemma:small_range}]
    By Lemmas \ref{lemma:fai_est} and \ref{lemma:kai_asymp_small}, we see that
    \begin{align*}
    	\mL_{[0, \delta n^{5/4}]}
    	&\leq \frac{\log^{n+1}(1/t)}{n!} \sum_{1 \leq a \leq \delta n^{5/4}} (\delta n^{5/4})^{n} t^{a} e^{c_2 n^{3/4}} e^{(c_1/4) n/ a^2} \\
    	&\leq \frac{\log^{n+1}(1/t)}{n!} (\delta n^{5/4})^{n+1} e^{c_2 n^{3/4}} e^{(c_1/4) n}
    \end{align*}
    Here, the last step follows by taking the union bound together with the fact that $t^a \leq 1$. Now, by the definition of $\delta_0$, we have $n^{1/4}\log(1/t)e^{(c_1/4)+1} \leq 1/\delta_0$. Moreover, a quantitative form of the Stirling's formula \cite{Robbins} tells us that $n! \geq \sqrt{2\pi} n^{n+1/2}e^{-n}$, and so,
    \begin{align*}
        \mL_{[0, \delta n^{5/4}]}
    	&\leq \frac{1}{(2\pi)^{1/2}n^{n+1/2}e^{-n}} \left( \frac{1}{\delta_0 n^{1/4} e^{(c_1/4)+1}} \right)^{n+1} (\delta n^{5/4})^{n+1} e^{c_2 n^{3/4}} e^{(c_1/4) n} \\
    	&= \rho^{n+1} \cdot \left(\frac{\delta}{\delta_0 \rho} \right)^{n+1} \frac{n^{1/2} e^{c_2 n^{3/4}}}{(2\pi)^{1/2} e^{(c_1/4)+1}}.
    \end{align*}
    If $\rho \in (\delta/\delta_0, 1)$, then the factor $(\delta/\rho\delta_0)^{n+1} n^{1/2}e^{c_2 n^{3/4}} $ is bounded, and hence, the claim follows.
\end{proof}

\subsection{Estimation of large range}

We now turn our attention to the range $a > \delta n^{5/4}$, where we recall that $\delta > 0$ is a fixed number chosen to satisfy \eqref{eq:delta}. We begin by proving the following lemma, which resolves the contribution of the~$\mK_{a,i}$'s for~$i \geq 2$.

\begin{lemma}
    \label{lemma:kai_asymp_large}
    There exists a universal constant $c_4 > 0$ such that
    \begin{align*}
    	e^{ -c_4 n^2 / a^2}
    	\leq \prod_{i \geq 2} \mK_{a,i}
    	\leq e^{ c_4 n^2 / a^2}
    \end{align*}
    whenever $a \geq \delta n^{5/4} \geq e^{c_1} n$. Here, $c_1$ is chosen as in Lemma \ref{lemma:fai_est}.
\end{lemma}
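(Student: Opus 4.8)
The plan is to prove the pointwise estimate $\lvert \log \mK_{a,i}\rvert \le \parti_i^3/f_{a,i}^2$ for every $i\ge 2$ under the standing hypothesis $a\ge\delta n^{5/4}\ge e^{c_1}n$, and then sum it over $i$. Throughout I use, exactly as in the proof of Lemma~\ref{lemma:kai_asymp_small}, the identity
\[
    \frac{(f_{a,i}-\nu+\parti_i-1)!}{(f_{a,i}-\nu)!\, f_{a,i}^{\parti_i-1}} = \prod_{k=1}^{\parti_i-1}\left(1 + \frac{k-\nu}{f_{a,i}}\right),
\]
so that $\mK_{a,i} = 2^{-\parti_i}\sum_{\nu=0}^{\parti_i}\binom{\parti_i}{\nu}\prod_{k=1}^{\parti_i-1}(1 + (k-\nu)/f_{a,i})$. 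The first step is to record that the hypothesis makes $f_{a,i}$ enormous next to $\parti_i$: Lemma~\ref{lemma:fai_est} gives $g_{a,i}\ge e^{-c_1/4}$, and $(2a)^i/(2i)$ is nondecreasing in $i$ for $i\ge 2$ (the ratio of consecutive terms is $2a\cdot i/(i+1)\ge 4a/3\ge 1$), whence $f_{a,i}\ge a^2 e^{-c_1/4}$; combined with $a\ge e^{c_1}n$ and $\parti_i\le n$ this is readily seen to force $\parti_i/f_{a,i}\le\tfrac12$, so every factor $1+(k-\nu)/f_{a,i}$ lies in $[\tfrac12,\tfrac32]$ and $\lvert k-\nu\rvert/f_{a,i}\le\tfrac12$.

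For the upper bound I would apply $1+u\le e^u$ factorwise and use $\sum_{k=1}^{\parti_i-1}(k-\nu)=(\parti_i-1)(\parti_i-2\nu)/2$ to obtain $\mK_{a,i}\le 2^{-\parti_i}\sum_\nu\binom{\parti_i}{\nu}e^{\gamma_i(\parti_i-2\nu)}$ with $\gamma_i:=(\parti_i-1)/(2f_{a,i})$; by the binomial theorem the right side equals $\cosh(\gamma_i)^{\parti_i}$, and the elementary inequality $\cosh x\le e^{x^2/2}$ then yields $\mK_{a,i}\le e^{\parti_i\gamma_i^2/2}\le e^{\parti_i^3/f_{a,i}^2}$. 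For the lower bound I would instead use $1+u\ge e^{u-u^2}$, valid since each $\lvert u\rvert\le\tfrac12$; since $\sum_{k=1}^{\parti_i-1}((k-\nu)/f_{a,i})^2\le\parti_i^3/f_{a,i}^2$ uniformly in $\nu$, this gives $\mK_{a,i}\ge e^{-\parti_i^3/f_{a,i}^2}\cosh(\gamma_i)^{\parti_i}\ge e^{-\parti_i^3/f_{a,i}^2}$ (equivalently, one may bypass the $\cosh$ evaluation here and invoke Jensen's inequality for the $\mathrm{Bin}(\parti_i,\tfrac12)$ average of $\gamma_i(\parti_i-2\nu)$, whose mean is $0$). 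Together these give $\lvert\log\mK_{a,i}\rvert\le\parti_i^3/f_{a,i}^2$ for all $i\ge 2$.

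It then remains to sum. From $f_{a,i}^2\ge a^4 e^{-c_1/2}$ and the elementary bound $\sum_{i\ge 2}\parti_i^3\le(\sum_{i\ge 1}i\parti_i)^3=n^3$ (using $\parti_i\le i\parti_i$ and superadditivity of $x\mapsto x^3$ on $[0,\infty)$) we get $\sum_{i\ge 2}\parti_i^3/f_{a,i}^2\le e^{c_1/2}n^3/a^4$, and since $a\ge e^{c_1}n$ forces $n/a^2\le 1$, this is at most $e^{c_1/2}n^2/a^2$. Hence $\lvert\log\prod_{i\ge 2}\mK_{a,i}\rvert\le\sum_{i\ge 2}\lvert\log\mK_{a,i}\rvert\le e^{c_1/2}n^2/a^2$, which is the assertion with $c_4=e^{c_1/2}$.

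I do not expect a genuine obstacle, but two points need care. First, one must confirm that the hypothesis $a\ge e^{c_1}n$ really does push every ratio $\parti_i/f_{a,i}$ with $i\ge 2$ below $\tfrac12$ uniformly in $i$; this is exactly where the monotonicity of $(2a)^i/(2i)$ and the crude bound $\parti_i\le n$ enter, and it is what makes the scalar inequalities $1+u\le e^u$ and $1+u\ge e^{u-u^2}$ applicable to every factor. Second, one must check that the quadratic remainder coming from $\log(1+u)$ contributes only at order $\parti_i^3/f_{a,i}^2$ --- the same order as $\parti_i\gamma_i^2$ --- rather than something larger, so that neither the leading term nor the error dominates the bound; this works precisely because $\sum_{k\le\parti_i}(k-\nu)^2=O(\parti_i^3)$ independently of $\nu$.
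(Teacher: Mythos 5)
Your proposal is correct, and it reaches the stated bound with the same overall skeleton as the paper (a pointwise bound on $\log \mK_{a,i}$ for each $i \geq 2$, then a sum over $i$), but the key pointwise estimate is obtained by a genuinely different mechanism. The paper bounds every factor of $\prod_{k=1}^{\parti_i-1}\bigl(1 + \tfrac{k-\nu}{f_{a,i}}\bigr)$ crudely and uniformly in $\nu$, getting $\bigl(1 - \tfrac{\parti_i}{f_{a,i}}\bigr)^{\parti_i} \leq \mK_{a,i} \leq \bigl(1 + \tfrac{\parti_i}{f_{a,i}}\bigr)^{\parti_i}$ and hence $|\log \mK_{a,i}| \leq 2\parti_i^2/f_{a,i} \leq e^{c_1} i\parti_i^2/a^2$, which it then sums using $\sum_i i\parti_i^2 \leq n^2$ (yielding $c_4 = e^{c_1}$). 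You instead keep the linear terms and let the $\mathrm{Bin}(\parti_i,\tfrac12)$ average kill them via the exact identity $2^{-\parti_i}\sum_\nu\binom{\parti_i}{\nu}e^{\gamma_i(\parti_i-2\nu)} = \cosh(\gamma_i)^{\parti_i}$ — the same device the paper itself deploys for $\mK_{a,1}$ in the small-$\parti_1$ case of Lemma \ref{lemma:ka1_asymp_large} — which gives the sharper second-order bound $|\log\mK_{a,i}| \leq \parti_i^3/f_{a,i}^2$; you then sum with the $i$-uniform bound $f_{a,i} \geq e^{-c_1/4}a^2$ and $\sum_{i\geq 2}\parti_i^3 \leq n^3$, landing at $e^{c_1/2}n^2/a^2$ (your intermediate bound $n^3/a^4$ is in fact smaller than what is needed). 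All the individual steps check out: the inequality $1+u \geq e^{u-u^2}$ on $[-\tfrac12,\tfrac12]$, the identity $\sum_{k=1}^{\parti_i-1}(k-\nu) = \tfrac{(\parti_i-1)(\parti_i-2\nu)}{2}$, the quadratic remainder $\sum_k(k-\nu)^2 \leq \parti_i^3$, and the monotonicity argument giving $f_{a,i}\geq e^{-c_1/4}a^2$. The only point to make fully explicit is the claim $\parti_i/f_{a,i}\leq\tfrac12$: your chain gives $\parti_i/f_{a,i} \leq e^{c_1/4}n/a^2 \leq e^{-7c_1/4}$, which is $\leq \tfrac12$ for the paper's admissible value $c_1 = 4$ (the paper sidesteps this by proving $f_{a,i} \geq 2e^{c_1}n \geq 2\parti_i$ directly, retaining the $i$-dependence of $f_{a,i}$). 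In short: the paper's argument is a little shorter, while yours exposes the mean-zero cancellation and gives a quantitatively stronger per-$i$ estimate; either route proves the lemma.
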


\begin{proof}
    Assume that $a \geq \delta n^{5/4} \geq e^{c_1} n$. When $i \geq 2$, Lemma \ref{lemma:fai_est} gives us that $f_{a,i} \geq e^{-c_1}\frac{(2a)^{2}}{2i} \geq \frac{2na}{i} \geq 2e^{c_1}n \geq 2\parti_i$. Now, letting $0 \leq \nu \leq \parti_1$, we have, as in the beginning of the proof of Lemma \ref{lemma:kai_asymp_small},
    \begin{align*}
    	\left( 1 - \frac{\parti_i}{f_{a,i}} \right)^{\parti_i}
    	\leq \mK_{a,i}
    	\leq \left( 1 + \frac{\parti_i}{f_{a,i}} \right)^{\parti_i}.
    \end{align*}
    Since $\frac{\parti_i}{f_{a,i}} \leq \frac{1}{2}$, we may apply inequalities $-2x \leq \log(1-x) $ and $ \log(1+x) \leq 2x$, which are valid for $0 \leq x \leq \frac{1}{2}$, to further simplify the above bounds, which results in
    \begin{align*}
    	-e^{c_1} \frac{i \parti_i^2}{a^2}
    	\leq -\frac{2\parti_i^2}{f_{a,i}}
    	\leq \log(\mK_{a,i})
    	\leq \frac{2\parti_i^2}{f_{a,i}}
    	\leq e^{c_1} \frac{i \parti_i^2}{a^2}.
    \end{align*}
    Finally, by summing this inequality for $i = 2, \cdots, n$ and utilizing the bound $\sum_{i} i \parti_i^2 \leq n^2$, the desired conclusion follows with $c_4 = e^{c_1}$.
\end{proof}

Next, we establish a detailed asymptotic expansion of $\mK_{a,1}$.

\begin{lemma}
    \label{lemma:ka1_asymp_large}
    Let $\delta \in (0, \delta_0)$. Then,
    \begin{align*}
        \mK_{a,1} = \exp\left\{ \frac{\parti_1^3}{12 a^2} - \frac{3 \parti_1^5}{160 a^4} + \cO \left(n^{-1/4}\right) \right\}
    \end{align*}
    holds in the range $a \geq \delta n^{5/4} \geq 2n$. Moreover, the implicit bound of the error term depends only on $s$ and~$\delta$.
\end{lemma}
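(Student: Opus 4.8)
The plan is to exploit the probabilistic meaning of the defining sum. For $i=1$ one has $f_{a,1}=a$, so, writing $m:=\parti_1$ and $\eta:=\nu-m/2$,
\[
    \mK_{a,1}=\sum_{\nu=0}^{m}\frac{1}{2^{m}}\binom{m}{\nu}\prod_{k=1}^{m-1}\Bigl(1+\frac{k-\nu}{a}\Bigr)=\E\bigl[e^{\psi(\eta)}\bigr],\qquad \psi(\eta):=\sum_{k=1}^{m-1}\log\Bigl(1+\frac{(k-m/2)-\eta}{a}\Bigr),
\]
where $\nu\sim\operatorname{Bin}(m,\tfrac12)$, so that $\eta$ is a centered binomial with $\E\eta=0$, $\E\eta^{2}=m/4$ (the cases $m\le1$ being trivial). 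First I would Taylor-expand $\psi$ about $\eta=0$ to fourth order. Since $a\ge2n\ge2m$ forces $|a+(k-m/2)-\eta|\ge a-m\ge a/2$ on $|\eta|\le m/2$, one has $|\psi^{(j)}(\eta)|\le(j-1)!\,(m-1)(2/a)^{j}$, so $\psi(\eta)=Q(\eta)+R(\eta)$ with $Q(\eta)=\sum_{j=0}^{4}q_{j}\eta^{j}$, $q_{j}:=\psi^{(j)}(0)/j!$, and $|R(\eta)|\le\tfrac15(m-1)m^{5}/a^{5}=\cO(n^{-1/4})$ uniformly in $\eta$ (using $m\le n$, $a\ge\delta n^{5/4}$; the implied constant depending only on $\delta$). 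Hence $\mK_{a,1}=e^{\cO(n^{-1/4})}\E[e^{Q(\eta)}]$, and it remains to evaluate $\log\E[e^{Q(\eta)}]$. By the symmetry $k\mapsto m-k$ of the index set the odd power sums $\sigma_{r}:=\sum_{k=1}^{m-1}(k-m/2)^{r}$ vanish for odd $r$, and one reads off closed forms; in particular $q_{0}=\psi(0)=-\tfrac{\sigma_{2}}{2a^{2}}-\tfrac{\sigma_{4}}{4a^{4}}+\cO(n^{-1/2})$, $q_{1}=-\tfrac{m-1}{a}-\tfrac{\sigma_{2}}{a^{3}}+\cO(n^{-5/4})$, $q_{2}=-\tfrac{m-1}{2a^{2}}+\cO(n^{-2})$, and $q_{2},q_{3},q_{4}<0$, where $\sigma_{2}=\tfrac{(m-1)m(m-2)}{12}$ and $\sigma_{4}=\tfrac{m^{5}}{80}+\cO(m^{4})$.

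The crux is that $q_{1}$ is only $\cO(n^{-1/4})$ while $\eta$ has spread $\Theta(\sqrt n)$, so $q_{1}\eta$ is \emph{not} negligible and must not be expanded; instead one uses the exact moment generating function $\E[e^{\mu\eta}]=(\cosh(\mu/2))^{m}$. Exponentially tilting the binomial by $e^{q_{1}\nu}$ turns $\nu$ into $\operatorname{Bin}(m,\tilde p)$ with $\tilde p=e^{q_{1}}/(1+e^{q_{1}})$; writing $\tilde\E$ for the corresponding expectation, $\tilde\mu:=\tilde\E\eta=\tfrac m2\tanh(q_{1}/2)$, $\tilde\sigma^{2}:=m\tilde p(1-\tilde p)$, and $X:=q_{2}\eta^{2}+q_{3}\eta^{3}+q_{4}\eta^{4}$, this gives
\[
    \log\mK_{a,1}=q_{0}+m\log\cosh(q_{1}/2)+\log\tilde\E\bigl[e^{X}\bigr]+\cO(n^{-1/4}).
\]
The residual $\log\tilde\E[e^{X}]$ still needs care, because the tilt displaces $\tilde\mu$ out to order $n^{3/4}$, so even $q_{2}\eta^{2}$ is $\cO(1)$ — not negligible — under the tilted law. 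I would use that $X=\eta^{2}(q_{2}+q_{3}\eta+q_{4}\eta^{2})\le0$ on $|\eta|\le m/2$: the quadratic factor has negative leading coefficient and its vertex $-q_{3}/(2q_{4})\approx-2a/3$ lies well to the left of $[-m/2,m/2]$, so on that interval it is maximal at $\eta=-m/2$, where it is negative since $|q_{3}|m/2<|q_{2}|$ (which reduces to $m<3a$, true as $a\ge2m$). Putting $Y:=X-q_{2}\tilde\mu^{2}$, the binomial moment formulas give $\tilde\E Y=\cO(n^{-1/2})$, $\tilde\E Y^{2}=\cO(n^{-1/2})$, and $Y\le-q_{2}\tilde\mu^{2}=\cO(1)$; since $Y$ is bounded above by a constant depending only on $\delta$, $|e^{Y}-1-Y|=\cO(Y^{2})$, whence $\tilde\E[e^{X}]=e^{q_{2}\tilde\mu^{2}}(1+\cO(n^{-1/2}))$ and $\log\tilde\E[e^{X}]=q_{2}\tilde\mu^{2}+\cO(n^{-1/2})$.

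It then remains to substitute the expansions of $q_{0},q_{1},q_{2}$ — together with those of $\tanh$ and $\log\cosh$, all valid since $|q_{1}|\le m/a\le1/(\delta n^{1/4})$ is small — into
\[
    \log\mK_{a,1}=\psi(0)+m\log\cosh(q_{1}/2)+q_{2}\tilde\mu^{2}+\cO(n^{-1/4}),
\]
and to collect the $a^{-2}$- and $a^{-4}$-order parts, everything else being $\cO(n^{-1/2})$ in the range $m\le n$, $a\ge\delta n^{5/4}$. The three summands contribute, respectively,
\[
    -\frac{m^{3}}{24a^{2}}-\frac{m^{5}}{320a^{4}},\qquad \frac{m^{3}}{8a^{2}}+\frac{m^{5}}{64a^{4}},\qquad -\frac{m^{5}}{32a^{4}},
\]
and since $-\tfrac1{24}+\tfrac18=\tfrac1{12}$ and $-\tfrac1{320}+\tfrac1{64}-\tfrac1{32}=-\tfrac3{160}$, we obtain $\log\mK_{a,1}=\tfrac{m^{3}}{12a^{2}}-\tfrac{3m^{5}}{160a^{4}}+\cO(n^{-1/4})$, which is the assertion with $m=\parti_1$.

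The main obstacle is the middle stage. The linear term $q_{1}\eta$ forces one to keep the binomial moment generating function exact rather than Taylor-expand the whole exponent; and — the subtle point — the tilt then shifts the effective location of $\eta$ so far that the quadratic term $q_{2}\eta^{2}$ becomes $\cO(1)$ rather than negligible under the tilted measure, so a \emph{second} scalar extraction (of $q_{2}\tilde\mu^{2}$) is required before the remaining exponential can be linearized. The sign fact $X\le0$ is what makes the residual bound robust, since it rules out any contribution from atypically large $|\eta|$ (where $q_{2}\eta^{2}$ could reach $\cO(n^{1/2})$).
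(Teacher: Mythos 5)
Your argument is correct, and it takes a genuinely different route from the paper. The paper splits into two cases: for $\parti_1 \leq n^{3/4}$ it disposes of everything with the binomial m.g.f., and for $\parti_1 \geq n^{3/4}$ it runs Laplace's method on the sum $\sum_\nu p(\nu)$ — locating the mode $\nu_0$ via the discrete critical-point equation, proving matching gaussian upper and lower bounds through $(\log p)''$, summing to get $\mK_{a,1} \approx \sqrt{\pi\parti_1/2}\,p(\nu_0)$, and finally expanding $p(\nu_0)$ with Stirling's formula. You instead give a single uniform argument: a quartic Taylor expansion of the log-weight in $\nu$ whose remainder is $\cO\bigl((m-1)m^5/a^5\bigr) = \cO(n^{-1/4})$ uniformly on the support, an exact treatment of the dangerous linear term via the binomial m.g.f.\ (equivalently, exponential tilting to $\operatorname{Bin}(m,\tilde p)$), and then the key observation that the tilted mean $\tilde\mu \asymp -m^2/(4a)$ is of order $n^{3/4}$, so the scalar $q_2\tilde\mu^2 = \cO(1)$ must be extracted before linearizing the residual with second-moment bounds; the sign fact $X \le 0$ (checked correctly via the location of the vertex of $q_4\eta^2+q_3\eta+q_2$) makes the bound $|e^Y-1-Y| \le C(\delta)Y^2$ legitimate. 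I verified the pieces you state without proof: $\sigma_2 = \tfrac{(m-1)m(m-2)}{12}$, $\sigma_4 = \tfrac{m^5}{80}+\cO(m^4)$, the moment estimates $\tilde\E Y, \tilde\E Y^2 = \cO(n^{-1/2})$ (the dominant contributions being $q_2\tilde\sigma^2$, $q_3\tilde\mu^3$ and $4q_2^2\tilde\mu^2\tilde\sigma^2$), and the three coefficient blocks $-\tfrac{m^3}{24a^2}-\tfrac{m^5}{320a^4}$, $\tfrac{m^3}{8a^2}+\tfrac{m^5}{64a^4}$ (where $\tfrac1{48}-\tfrac1{192}=\tfrac1{64}$ uses the $\sigma_2/a^3$ correction in $q_1$), and $-\tfrac{m^5}{32a^4}$; they combine to $\tfrac1{12}$ and $-\tfrac3{160}$ exactly as claimed, matching the paper's totals from \eqref{eq:ka1_est_1}--\eqref{eq:ka1_est_3}. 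What each approach buys: yours avoids the case split on $\parti_1$, the mode-location and gaussian sandwich estimates, and all Stirling expansions, and its error constants depend only on $\delta$; the paper's Laplace method, while more computational at the end, exhibits the gaussian profile of $p(\nu)$ explicitly, which is perhaps more transparent structurally. The only places where your write-up is terse rather than complete are the binomial moment bounds and the final ``collection'' of coefficients, but both check out.
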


\begin{proof}
    It is convenient to separate the case of small $\parti_1$ from the general argument. Letting $0 \leq \nu \leq \parti_1$ and using the fact that $1+x = e^{x + \cO(x^2)}$ near $x = 0$, we get
    \begin{align*}
        \frac{(a - \nu + \parti_i - 1)!}{(a - \nu)! a^{\parti_i - 1}}
        = \prod_{k=1}^{\parti_1 - 1} \left( 1 + \frac{k-\nu}{a} \right)
        = \exp\left\{ \sum_{k=1}^{\parti_1 - 1} \left( \frac{k-\nu}{a} + \cO\left( \frac{\parti_1^2}{a^2} \right) \right) \right\}.
    \end{align*}
    So, if $N$ is a random variable having binomial distribution with parameters $\parti_1$ and $\frac{1}{2}$, then
    \begin{align*}
        \mK_{a,1}
        = \E \left[ \frac{(a - N + \parti_i - 1)!}{(a - N)! a^{\parti_i - 1}} \right]
        = e^{\cO(\parti_1^3/a^2)} \E \left[ \exp\left\{ \frac{\parti_1 - 1}{a} \left( \frac{\parti_1}{2} - N \right) \right\} \right]
    \end{align*}
    and
    \begin{align*}
        \E \left[ \exp\left\{ \frac{\parti_1 - 1}{a} \left( \frac{\parti_1}{2} - N \right) \right\} \right]
        = \cosh^{\parti_1} \left( \frac{\parti_1 - 1}{2a} \right)
        = e^{\cO(\parti_1^3/a^2)},
    \end{align*}
    where we utilized the fact that $\cosh(x) = e^{\cO(x^2)}$ near $x = 0$. In particular, if we set $\beta = \frac{3}{4}$ and assume that $\parti_1 \leq n^{\beta}$, then $\parti_1^3/a^2 \leq \delta^{-2} n^{-1/4}$, and so, the conclusion of the lemma holds. Again, we prefer to use the named variable $\beta$ rather than the actual value in order to emphasize how it is employed in each step of the proof.
    
    The previous computation leads our attention to the case $\parti_1 \geq n^{\beta}$ with $\beta = \frac{3}{4}$. In such case, we will write
    \begin{align*}
        \mK_{a,1} = \sum_{\nu=0}^{\parti_1} p(\nu),
        \quad \text{where} \quad
        p(\nu) = \frac{1}{2^{\parti_1}} \binom{\parti_1}{\nu} \frac{(a + \parti_1 - 1 - \nu)!}{(a - \nu)! a^{\parti_1 - 1}}.
    \end{align*}
    We adopt the idea of Laplace's method to estimate $\mK_{a,1}$. That said, we will argue by showing that $p(\nu)$ is approximately a gaussian density. Our goal is to establish a rigorous version of this claim and then draw the desired estimate from it.
    
    We first obtain a global upper bound of $p$. Identify the factorial $n!$ with the gamma function $\Gamma(n+1)$ so that $p(\nu)$ is defined as an analytic function of $\nu$ on $[0, \parti_1]$. It is well known that the second derivative of the log-gamma function satisfies $(\log \Gamma(z+1))'' = \sum_{n=1}^{\infty} (n+z)^{-2}$, and so,
    \begin{align*}
        (\log p(\nu))''
        &= -\sum_{n=1}^{\infty} \left( \frac{1}{(\nu + n)^2} + \frac{1}{(\parti_1 - \nu + n)^2} \right) - \sum_{k=1}^{\parti_1 - 1} \frac{1}{(a-\nu+k)^2} \\
        &\leq -\sum_{n=1}^{\infty} \frac{2}{(\frac{\parti_1}{2} + n)^2}
         \leq - \int_{1}^{\infty} \frac{2}{(\frac{\parti_1}{2} + x)^2} \, \mathrm{d}x
         = - \frac{4}{\parti_1 + 2}.
    \end{align*}
    In particular, $(\log p(\nu))'$ is strictly decreasing on $[0, \parti_1]$. Moreover, there exists a unique solution $\nu = \tilde{\nu}_{0}$ of the equation $\log p(\nu+1) - \log p(\nu) = 0$ on $[0, \parti_1]$, which is explicitly given by
    \begin{align}
        \label{eq:nutilde0_asymp}
        \tilde{\nu}_{0}
        = \frac{a + \parti_1 - 1 - \sqrt{a^2 + \parti_1^2 - 1}}{2}
        = \frac{\parti_1}{2} - \frac{\parti_1^2}{4a} + \frac{\parti_1^4}{16a^3} + \cO\left(1\right).
    \end{align}
    Then, by the mean-value theorem, there exists $\nu_0 \in [\tilde{\nu}_{0},\tilde{\nu}_{0}+1]$ at which $(\log p(\nu))'$ vanishes, and $\nu_0$ is unique by the strict monotonicity. Integrating twice, we get
    \begin{align}
        \label{eq:pnu_upper_bound}
        p(\nu)
        = p(\nu_{0}) \exp\left\{ \int_{\nu_0}^{\nu} (\nu - t) (\log p(t))'' \, \mathrm{d} t \right\}
        \leq p(\nu_{0}) \exp\left\{ -\frac{2}{\parti_1 + 2} (\nu - \nu_{0})^2 \right\}.
    \end{align}
    
    Next, we claim that this upper bound is a correct asymptotic formula for $p(\nu)$, which amounts to providing a lower bound similar to \eqref{eq:pnu_upper_bound}. However, one minor issue is that such lower bound cannot generally exist on all of $[0, \parti_1]$. To circumvent this, we notice that $p(\nu)/p(\nu_0)$ becomes small if $|\nu - \nu_0|$ is sufficiently large compared to $\sqrt{\parti_1}$. This suggests that we may focus on the range $|\nu - \nu_{0}| \leq n^{\gamma}\sqrt{\parti_1}$, where $\gamma$ is chosen as $\gamma = \frac{\beta}{2}-\frac{1}{4} = \frac{1}{8} $. And in this range, we want to obtain a gaussian lower bound of $p$. Focusing on the second derivative of $\log p(\nu)$ as before, we obtain
    \begin{align*}
        (\log p(\nu))''
        &= -\left( \frac{1}{\nu} + \cO\left(\frac{1}{\nu^2}\right) + \frac{1}{\parti_1 - \nu} + \cO\left(\frac{1}{(\parti_1 - \nu)^2}\right) \right) + \cO\left( \frac{\parti_1}{a^2} \right) \\
        &= -\frac{\parti_1}{\nu( \parti_1 - \nu)} + \cO\left( n^{-2\beta} \right) + \cO\left( n^{-3/2} \right),
    \end{align*}
    where both estimates $\sum_{n=1}^{\infty} \frac{1}{(n+x)^2} = \frac{1}{x} + \cO\left(\frac{1}{x^2}\right)$ uniformly in $x > 0$ and $\left|\frac{1}{a-\nu+k}\right| \leq \frac{2}{a}$ are exploited in the first step. To simplify further, we note that
    \begin{align*}
        \left| \nu - \frac{\parti_1}{2} \right|
        \leq \left|\nu - \nu_0 \right| + \left| \nu_0 - \frac{\parti_1}{2} \right|
        \leq n^{\gamma}\sqrt{\parti_1} + \cO\left( \frac{\parti_1^2}{a} \right)
        \leq \cO \left( \frac{\parti_1}{n^{1/4}} \right).
    \end{align*}
    In the last step, we made use of the bounds $\parti_1/a = \cO(n^{-1/4})$ and $n^{\gamma}/\sqrt{\parti_1} \leq n^{\gamma-\beta/2} = n^{-1/4}$. So it follows that
    \begin{align*}
        \frac{\parti_1}{\nu( \parti_1 - \nu)}
        = \frac{4}{\parti_1} \cdot \frac{1}{1 - \left( \frac{\nu - (\parti_1/2)}{\parti_1/2} \right)^2}
        = \frac{4}{\parti_1} \left(1 + \cO\left( n^{-1/2} \right) \right)
        = \frac{4}{\parti_1} + \cO\left( n^{-\beta-1/2} \right).
    \end{align*}
    Plugging this into the asymptotic formula of $(\log p(\nu))''$ and combining all the error terms into a single one, we end up with
    \begin{align*}
        (\log p(\nu))''
        &= -\frac{4}{\parti_1} + \cO\left( n^{-5/4} \right).
    \end{align*}
    Given this asymptotic formula, we can proceed as in \eqref{eq:pnu_upper_bound} to obtain
    \begin{align*}
        p(\nu)
        &= p(\nu_0) \exp \left\{ -\frac{2}{\parti_1}(\nu - \nu_{0})^2 + \cO\left( n^{2\gamma-5/4} \right) \right\}.
    \end{align*}
    From this, we have
    \begin{align*}
        \sum_{\nu : |\nu - \nu_0| \leq n^{\gamma}\sqrt{\parti_1}} \frac{p(\nu)}{p(\nu_0)}
        &= e^{\cO(n^{-1/4})} \int_{|t| \leq n^{\gamma}\sqrt{\parti_1}} e^{-\frac{2}{\parti_1}t^2} \, \mathrm{d}t \\
        &= e^{\cO\left( n^{-1/4} \right)} \left( \int_{\bR} e^{-\frac{2}{\parti_1}t^2} \, \mathrm{d}t - \int_{|t| > n^{\gamma}\sqrt{\parti_1}} e^{-\frac{2}{\parti_1}t^2} \, \mathrm{d}t \right) \\
        &= e^{\cO\left( n^{-1/4} \right)} \sqrt{\frac{\pi \parti_1}{2}} + \cO\left( e^{-2n^{\gamma}} \right)
    \end{align*}
    The first step follows by noting that $-\frac{2}{\parti_1}(t - \nu_0)^2 = -\frac{2}{\parti_1} (\nu - \nu_0)^2 + \cO\left(n^{\gamma-\beta/2}\right) $ if $|t - \nu| \leq 1$ and~$\gamma - \beta/2 = -1/4$. Also, in the last step, we utilized the tail estimate $\int_{x}^{\infty} e^{-t^2/2} \, \mathrm{d}t < e^{-x^2/2}/x$, which is valid for $x > 0$, to produce a stretched-exponential decay. Similar reasoning shows that
    \begin{align*}
        \sum_{\nu : |\nu - \nu_0| \leq n^{\gamma}\sqrt{\parti_1}} \frac{p(\nu)}{p(\nu_0)}
        \leq \cO \left( \int_{|t| > n^{\gamma}\sqrt{\parti_1}} e^{-\frac{2}{\parti_1+2}t^2} \, \mathrm{d}t \right)
        \leq \cO\left( e^{-2n^{\gamma}} \right).
    \end{align*}
    Putting all the estimates altogether, we obtain
    \begin{align}
        \label{eq:ka1_est_1}
        \mK_{a,1}
        = \sqrt{\frac{\pi \parti_1}{2}} e^{\cO\left( n^{-1/4} \right)} p(\nu_{0}).
    \end{align}
    
    In view of \eqref{eq:ka1_est_1}, it remains to estimate $p(\nu_0)$. Since $\nu_{0} - \tilde{\nu}_0 = \cO(1)$, it follows $\nu_{0}$ satisfies the same asymptotic formula as in \eqref{eq:nutilde0_asymp}. Write $\mu = \nu_{0} - \frac{\parti_1}{2}$. We know that $\mu = o(\parti_1)$, or more precisely, $\mu/\parti_1 = \cO(n^{-1/4})$. Then, by using Stirling's approximation \cite{Robbins}
    \begin{align*}
        \log(n!)
        = \left(n + \frac{1}{2}\right)\log n - n + \log\sqrt{2\pi} + \cO\left(\frac{1}{n}\right),
    \end{align*}
    we obtain
    \begin{align*}
        \log \left[ \frac{1}{2^{\parti_1}} \binom{\parti_1}{\nu_{0}} \right]
        &= -\parti_1 \log 2 + \log (\parti_1 !) - \log \left( \frac{\parti_1}{2} + \mu \right)! - \log \left( \frac{\parti_1}{2} - \mu \right)! \\
        %
        % &= -\parti_1 \log 2 + \left[ \left(\parti_1 + \frac{1}{2}\right)\log \parti_1 - \parti_1 + \log\sqrt{2\pi} \right] \\
        % & \qquad - \left[ \left(\frac{\parti_1}{2} + \mu + \frac{1}{2}\right)\log \left( \frac{\parti_1}{2} + \mu \right) - \left( \frac{\parti_1}{2} + \mu \right) + \log\sqrt{2\pi} \right] \\
        % & \qquad - \left[ \left(\frac{\parti_1}{2} - \mu + \frac{1}{2}\right)\log \left( \frac{\parti_1}{2} - \mu \right) - \left( \frac{\parti_1}{2} -\mu \right) + \log\sqrt{2\pi} \right] \\
        % & \qquad + \cO\left(\frac{1}{\parti_1}\right) \\
        %
        &=  - \left(\frac{\parti_1}{2} + \mu + \frac{1}{2}\right)\log \left( 1 + \frac{2\mu}{\parti_1} \right)
        - \left(\frac{\parti_1}{2} - \mu + \frac{1}{2}\right)\log \left( 1 - \frac{2\mu}{\parti_1} \right) \\
        & \qquad + \log 2 - \frac{1}{2}\log \parti_1 - \log\sqrt{2\pi} + \cO\left(\frac{1}{\parti_1}\right) \\
        &= \frac{\parti_1}{2} \left[ \left( \frac{1}{\parti_1} - 1 \right) \left( \frac{2\mu}{\parti_1} \right)^2 + \left( \frac{1}{2\parti_1} - \frac{1}{6} \right) \left( \frac{2\mu}{\parti_1} \right)^4 + \cO \left( \frac{2\mu}{\parti_1} \right)^6 \right] \\
        &\qquad + \frac{1}{2}\log\left(\frac{2}{\pi \parti_1} \right) + \cO \left( n^{-\beta} \right).
    \end{align*}
    This can be further simplified by noting that $\frac{\mu}{\parti_1} = - \frac{\parti_1}{4a} + \frac{\parti_1^3}{16a^3} + \cO\left(\frac{1}{n^{1/4}\parti_1}\right) = \cO(n^{-1/4})$, and the result is
    \begin{align}
        \log \left[ \frac{1}{2^{\parti_1}} \binom{\parti_1}{\nu_{0}} \right]
        &= \frac{1}{2}\log\left(\frac{2}{\pi \parti_1} \right) - \frac{2\mu^2}{\parti_1} - \frac{4\mu^4}{3\parti_1^3} + \cO \left(n^{-1/4}\right) \nonumber \\
        &= \frac{1}{2}\log\left(\frac{2}{\pi \parti_1} \right) - \frac{\parti_1^3}{8a^2} + \frac{11\parti_1^5}{192a^4} + \cO \left(n^{-1/4}\right). \label{eq:ka1_est_2}
    \end{align}
    For the remaining factor, we estimate it as follows.
    \begin{align*}
        \log \left[ \frac{(a + \parti_1 - 1 - \nu_0)!}{(a - \nu_0)! a^{\parti_1 - 1}} \right]
        &= \log \left[ \frac{(a + \parti_1 - \nu_0)!}{(a - \nu_0)! a^{\parti_1}} \right] + \log \left[ \frac{a}{a + \parti_1 - \nu_0} \right] \\
        &= - \parti_1 + \left(a + \frac{1}{2} + \frac{\parti_1}{2} - \mu \right) \log \left( 1 + \frac{\frac{\parti_1}{2} - \mu}{a} \right) \\
        &\qquad - \left(a + \frac{1}{2} - \frac{\parti_1}{2} - \mu\right) \log \left( 1 - \frac{\frac{\parti_1}{2} + \mu}{a} \right) + \cO\left(n^{-1/4}\right)
    \end{align*}
    After some painful expansion, we end up with
    \begin{align}
        \label{eq:ka1_est_3}
        \log \left[ \frac{(a + \parti_1 - 1 - \nu_0)!}{(a - \nu_0)! a^{\parti_1 - 1}} \right]
        &= \frac{5 \parti_1^3}{24 a^2} - \frac{73 \parti_1^5}{960 a^4} + \cO\left(n^{-1/4}\right).
    \end{align}
    Therefore, the conclusion follows by combining \eqref{eq:ka1_est_1}, \eqref{eq:ka1_est_2} and \eqref{eq:ka1_est_3} altogether.
\end{proof}

\subsection{Estimation of the peak generating function}

\begin{lemma}
    \label{lemma:large_range}
    Let $\delta \in (0, \delta_0)$ and write $\alpha_1 = \parti_1 / n$ for the density of fixed points. Then
    \begin{align*}
        \mL_{(\delta n^{5/4}, \infty)} = \exp\left\{ \frac{\alpha_1^3}{3} s\sqrt{n} + \left( \frac{\alpha_1^3}{18} - \frac{3\alpha_1^5}{10} + \frac{2\alpha_1^6}{9} \right) s^2 + \cO\left(n^{-1/4}\right) \right\}
    \end{align*}
    holds in the range $\delta n^{5/4} \geq \max\{e^{c_1}, 2\}n$. Moreover, the implicit bound of the error term depends only on~$\delta$ and~$s$.
\end{lemma}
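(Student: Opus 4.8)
The plan is to combine the three preceding lemmas into one uniform estimate over the large range, pass from a sum to a Gamma integral, and read off the asymptotics. For $a\ge\delta n^{5/4}\ge\max\{e^{c_1},2\}n$ I would bound the product $\prod_{1\le i\le n}g_{a,i}^{\parti_i}\mK_{a,i}$. By Lemma \ref{lemma:fai_est} one has $g_{a,1}=g_{a,2}=1$ and $|\log g_{a,i}|\le c_1(2a)^{-2i/3}$ for $i\ge 3$, so $\sum_{i\ge 3}\parti_i|\log g_{a,i}|\le c_1 n\,(2a)^{-2}=\cO(n^{-3/2})$; Lemma \ref{lemma:kai_asymp_large} gives $\bigl|\log\prod_{i\ge 2}\mK_{a,i}\bigr|\le c_4 n^2/a^2=\cO(n^{-1/2})$; and Lemma \ref{lemma:ka1_asymp_large} reads $\mK_{a,1}=h(a)\,e^{\cO(n^{-1/4})}$, where $h(a):=\exp\{\tfrac{\parti_1^3}{12a^2}-\tfrac{3\parti_1^5}{160a^4}\}$. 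Multiplying, uniformly for $a\ge\delta n^{5/4}$,
\begin{align*}
    \prod_{1\le i\le n}g_{a,i}^{\parti_i}\mK_{a,i}=h(a)\,e^{\cO(n^{-1/4})},
    \qquad\text{hence}\qquad
    \mL_{(\delta n^{5/4},\infty)}=e^{\cO(n^{-1/4})}\,\frac{\log^{n+1}(1/t)}{n!}\sum_{a>\delta n^{5/4}}a^n t^a h(a).
\end{align*}

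Next I would replace the sum by an integral. For $a\ge 2n$ one checks $(\log h)'(a)<0$ and $\bigl(\log(a^n t^a h(a))\bigr)''=-\tfrac{n}{a^2}+\tfrac{\parti_1^3}{2a^4}-\tfrac{3\parti_1^5}{8a^6}\le-\tfrac{n}{a^2}\bigl(1-\tfrac{n^2}{2a^2}\bigr)<0$, so $g(a):=a^n t^a h(a)$ is log-concave, hence unimodal, on $(\delta n^{5/4},\infty)$, with a Gaussian-type bump centred near $a_*:=n/\log(1/t)$ of width $\sigma\asymp\sqrt n/\log(1/t)\asymp n^{3/4}$; because $\delta<\delta_0$ forces $\delta n^{5/4}\le a_*/e^{(c_1/4)+1}$, the left endpoint sits strictly below the bump, so the peak of $g$ is interior. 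For a unimodal function the sum and the integral differ by at most $\max g$ while $\int g\gtrsim\sigma\max g$, so the replacement costs only a factor $1+\cO(n^{-3/4})$, giving $\mL_{(\delta n^{5/4},\infty)}=e^{\cO(n^{-1/4})}\tfrac{\log^{n+1}(1/t)}{n!}\int_{\delta n^{5/4}}^\infty a^n t^a h(a)\,da$.

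Now the heart of the matter: linearize $h$ near $a_*$. Put $\kappa:=(\log h)'(a_*)$. Since $(\log h)''(a_*)=\cO(n^{-2})$ and the effective support is $\{|a-a_*|\lesssim\sigma\log n\}$ with $\sigma\log n=\cO(n^{3/4}\log n)$, a Taylor estimate gives $\log h(a)=\log h(a_*)+\kappa(a-a_*)+\cO(n^{-1/4})$ there, and the contribution of $a\le\delta n^{5/4}$ and of $|a-a_*|>\sigma\log n$ is exponentially negligible (log-concavity controls the tails, and $\delta n^{5/4}$ lies strictly left of the bump). Replacing $h(a)$ by $h(a_*)e^{\kappa(a-a_*)}$ and using the exact identity $\int_0^\infty a^n e^{-ba}\,da=n!/b^{n+1}$ with $b=\log(1/t)-\kappa$, the prefactor $\log^{n+1}(1/t)/n!$ cancels the Gamma integral down to
\begin{align*}
    \mL_{(\delta n^{5/4},\infty)}=e^{\cO(n^{-1/4})}\,h(a_*)\,e^{-\kappa a_*}\Bigl(1-\tfrac{\kappa}{\log(1/t)}\Bigr)^{-(n+1)}.
\end{align*}
Using the closed-form Gamma integral here instead of a Laplace approximation is what keeps the final step clean.

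Finally I would expand via \eqref{eq:t_asymp}: $\log(1/t)=2\sqrt s\,n^{-1/4}+\tfrac{s^{3/2}}{6}n^{-3/4}+\cO(n^{-5/4})$ (the $n^{-1/2}$ and $n^{-1}$ coefficients vanish), so $\tfrac{n^3}{a_*^2}=n\log^2(1/t)=4s\sqrt n+\tfrac23 s^2+\cO(n^{-1/2})$ and $\tfrac{n^5}{a_*^4}=n\log^4(1/t)=16s^2+\cO(n^{-1/2})$; with $\alpha_1=\parti_1/n$ this yields $\log h(a_*)=\tfrac{\alpha_1^3}{3}s\sqrt n+\tfrac{\alpha_1^3}{18}s^2-\tfrac{3\alpha_1^5}{10}s^2+\cO(n^{-1/2})$. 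Likewise $\kappa=-\tfrac{\alpha_1^3 n^3}{6a_*^3}+\cO(n^{-5/4})=-\tfrac{4\alpha_1^3 s^{3/2}}{3}n^{-3/4}+\cO(n^{-5/4})$, hence $\tfrac{\kappa}{\log(1/t)}=-\tfrac{2\alpha_1^3 s}{3}n^{-1/2}+\cO(n^{-1})$. The key algebraic cancellation is that $-\kappa a_*=-\tfrac{n\kappa}{\log(1/t)}$ while the linear term of $-(n+1)\log\bigl(1-\tfrac{\kappa}{\log(1/t)}\bigr)$ is $+\tfrac{(n+1)\kappa}{\log(1/t)}$, so their sum collapses to the harmless $\tfrac{\kappa}{\log(1/t)}=\cO(n^{-1/2})$, leaving only the quadratic term $\tfrac{n+1}{2}\bigl(\tfrac{\kappa}{\log(1/t)}\bigr)^2=\tfrac{2\alpha_1^6}{9}s^2+\cO(n^{-1/2})$. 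Adding the pieces,
\begin{align*}
    \mL_{(\delta n^{5/4},\infty)}=\exp\Bigl\{\tfrac{\alpha_1^3}{3}s\sqrt n+\bigl(\tfrac{\alpha_1^3}{18}-\tfrac{3\alpha_1^5}{10}+\tfrac{2\alpha_1^6}{9}\bigr)s^2+\cO(n^{-1/4})\Bigr\},
\end{align*}
as claimed. I expect the main obstacle to be precisely this last bookkeeping — making every $\cO(\sqrt n)$ and $\cO(1)$ contribution cancel down to an $\cO(n^{-1/4})$ residue, which is why one must carry $\log(1/t)$ to three orders and why the exact Gamma integral (rather than Laplace's method, with its own errors) is essential; a secondary point is ensuring the sum-to-integral step and the localization of $h$ genuinely stay within $\cO(n^{-1/4})$, which rests on log-concavity of $g$ on $(\delta n^{5/4},\infty)$ and on $\delta<\delta_0$ placing the truncation strictly left of the bump.
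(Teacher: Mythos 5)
Your proposal is correct and lands on the right constants, but it takes a partly different route from the paper, so a comparison is in order. The first step is identical: both reduce $\mL_{(\delta n^{5/4},\infty)}$, via Lemmas \ref{lemma:fai_est}, \ref{lemma:kai_asymp_large} and \ref{lemma:ka1_asymp_large}, to $\frac{\log^{n+1}(1/t)}{n!}\sum_{a>\delta n^{5/4}}a^n t^a h(a)$ up to $e^{\cO(n^{-1/4})}$. From there the paper compares sum and integral term by term (each factor moves by $e^{\cO(n^{-1/4})}$ when $|x-a|\le 1$), substitutes $x=\frac{n}{\log(1/t)}(1+w/\sqrt n)$, and runs Laplace's method through the comparison function of Lemma \ref{lemma:approx_gaussian}, the $\frac{2\alpha_1^6}{9}s^2$ term arising from a Gaussian integral with a linear shift; you instead justify sum-to-integral by log-concavity of $G(a)=a^n t^a h(a)$, linearize $\log h$ at the mode $a_*=n/\log(1/t)$, and use the exact identity $\int_0^\infty a^n e^{-ba}\,\mathrm{d}a=n!/b^{n+1}$ with $b=\log(1/t)-\kappa$, so the same term emerges from the quadratic piece of $-(n+1)\log(1-\kappa/\log(1/t))$ after the exact cancellation $-\kappa a_*+(n+1)\kappa/\log(1/t)=\kappa/\log(1/t)$. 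Your route avoids Lemma \ref{lemma:approx_gaussian} and gives a cleaner endgame; the paper's route avoids any localization because all its errors are per-term relative errors. Two points in your sketch should be written out: the lower bound $\int G\gtrsim n^{3/4}\max G$ (from $|(\log G)''|\lesssim n^{-3/2}$ near the maximizer), and the left-tail control, where $h(a)$ can be as large as $e^{\sqrt n/(12\delta^2)}$ at $a=\delta n^{5/4}$, so you must use that $\delta<\delta_0$ places the endpoint at $\le e^{-(c_1/4)-1}a_*$, where the Gamma factor has already dropped by $e^{-cn}$, and that $|(\log h)'|\lesssim n^{-3/4}$ so the quadratic decay beats the growth of $h$ beyond distance $n^{3/4}\log n$; both are routine, and your expansions of $\log(1/t)$, $\log h(a_*)$ and $\kappa/\log(1/t)$ are accurate.
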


Following Kim and Lee's method \cite{KimLee}, we will utilize Laplace's method to approximate the sum by the integral of a certain gaussian density function and show that the relative error due to this approximation can be controlled in an explicit and uniform manner. The following simple lemma is useful for this purpose.

\begin{lemma}
    \label{lemma:approx_gaussian}
    Define $f_n: \bR \to \bR$ by $f_n(x) = \left( 1 + \frac{x}{\sqrt{n}} \right)^n e^{-\sqrt{n}x} \mathbf{1}_{[-\sqrt{n},\infty)}(x)$. Then
    \begin{enumerate}[topsep=0.25em,itemsep=0em,label={(\arabic*)}]
        \item If $x \geq 0$ and $l > n > 0$, then $f_l(x) \leq f_n(x) \leq (2/\sqrt{e})^n e^{-\sqrt{n}x/2}$.
        
        \item If $x \leq 0$ and $l > n > 0$, then $f_n(x) \leq f_l(x) \leq e^{-x^2/2}$.
        
        \item $f_n(x) \to e^{-x^2/2}$ pointwise as $n\to\infty$.
    \end{enumerate}
\end{lemma}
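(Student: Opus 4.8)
The plan is to pass to the logarithm $g_n(x) := \log f_n(x) = n\log\bigl(1 + x/\sqrt n\bigr) - \sqrt n\,x$, defined for $x \ge -\sqrt n$ (with the convention $f_n(x) = 0$, $g_n(x) = -\infty$ for $x < -\sqrt n$), and to treat the three assertions in turn: first the monotonicity in $n$, then the two explicit bounds, and finally the pointwise limit. Throughout I regard $n > 0$ as a continuous parameter, which is legitimate since $f_n$ extends to all real $n \ge 1$.

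For the monotonicity, I would differentiate $g_n(x)$ in $n$. Writing $v = x/\sqrt n$, the product and chain rules give, after a short simplification, $\frac{\partial}{\partial n} g_n(x) = \tfrac12\,\phi(v)$, where $\phi(v) := 2\log(1+v) - \frac{v}{1+v} - v$ is defined for $v > -1$. Since $\phi(0) = 0$ and $\phi'(v) = -v^2/(1+v)^2 \le 0$, the function $\phi$ is nonincreasing; hence $\phi \le 0$ on $[0,\infty)$ and $\phi \ge 0$ on $(-1,0]$. Consequently $n \mapsto g_n(x)$ is nonincreasing when $x \ge 0$ and nondecreasing when $x \le 0$, on the range of $n$ with $x > -\sqrt n$ (the boundary case $x = -\sqrt n$ being trivial since then $f_n(x) = 0$). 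Exponentiating yields $f_l(x) \le f_n(x)$ for $x \ge 0$ and $f_n(x) \le f_l(x)$ for $x \le 0$ whenever $l > n$.

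For the two upper bounds I would invoke elementary one-variable inequalities with $y := x/\sqrt n$. If $y \ge 0$, the function $y \mapsto \log(1+y) - y/2$ attains its global maximum $\log 2 - \tfrac12$ at $y = 1$, so $g_n(x) + \tfrac12\sqrt n\,x = n\bigl(\log(1+y) - y/2\bigr) \le n(\log 2 - \tfrac12)$, which is exactly $f_n(x) \le (2/\sqrt e)^n e^{-\sqrt n x/2}$. If $-1 < y \le 0$, then $\log(1+y) \le y - y^2/2$ (the difference vanishes at $y = 0$ and has derivative $y^2/(1+y) \ge 0$), whence $g_n(x) \le \sqrt n\,x - x^2/2 - \sqrt n\,x = -x^2/2$, i.e. $f_n(x) \le e^{-x^2/2}$; together with the monotonicity just established this also gives $f_l(x) \le e^{-x^2/2}$. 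Part (3) then follows immediately: once $n$ is large enough that $x > -\sqrt n$, the Taylor expansion gives $g_n(x) = n\bigl(x/\sqrt n - x^2/(2n) + \cO(n^{-3/2})\bigr) - \sqrt n\,x = -x^2/2 + \cO(n^{-1/2})$, so $f_n(x) \to e^{-x^2/2}$.

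The only step that requires any care is pinning down the sign of the auxiliary function $\phi$ on each of $[0,\infty)$ and $(-1,0]$, together with choosing the correct elementary bounds on $\log(1+y)$ for positive and for negative $y$; I do not anticipate a genuine obstacle, and the proof should be short.
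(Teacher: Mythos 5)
Your proposal is correct and takes essentially the same route as the paper: both pass to the logarithm $n\log(1+x/\sqrt{n})-\sqrt{n}\,x$, treat $n$ as a continuous parameter to prove the monotonicity claims in (1) and (2), and get (3) from the expansion of $\log(1+x/\sqrt{n})$. The only cosmetic differences are that the paper obtains the bound $e^{-x^2/2}$ in (2) from monotone increase to the limit and the bound in (1) from concavity in $x$ via the tangent line at $x=\sqrt{n}$, whereas you use the equivalent direct inequalities $\log(1+y)\le y-y^2/2$ on $(-1,0]$ and $\max_{y\ge 0}\bigl(\log(1+y)-y/2\bigr)=\log 2-\tfrac12$.
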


The estimation of $f_n$ is a recurring tool in previous works (see Lemma 4.3 of \cite{KimLee} and the proof therein, for instance) and requires only basic calculus computation. Nevertheless, we include the proof for self-containedness.

\begin{proof}
    Let $h(t, x) = t \log\left( 1 + \frac{x}{\sqrt{t}}\right) - \sqrt{t}x$. It is easy to check that
    \begin{itemize}[topsep=0.25em,itemsep=0em]
        \item $x \mapsto h(t, x)$ is concave on $(0, \infty)$ for each $t \in (0, \infty)$,
        \item $t \mapsto h(t, x)$ is decreasing on $(0, \infty)$ for each $x \geq 0$,
        \item $t \mapsto h(t, x)$ is increasing on $(x^2, \infty)$ for each $x \leq 0$, and
        \item $h(t, x) \to -x^2/2$ as $t\to\infty$ for each $x \in \bR$.
    \end{itemize}
    From $f_n(x) = e^{h(n, x)}$, the assertions (2) and (3) follows immediately. Moreover, we may exploit the concavity of $x \mapsto h(t, x)$ to bound $h(t, x) \leq h(t, \sqrt{n}) + \frac{\partial h}{\partial x}(t, \sqrt{n})(x - \sqrt{n})$, which gives (1).
\end{proof}

Now we return to the proof of the main claim of this section.

\begin{proof}[Proof of Lemma \ref{lemma:large_range}]
    Assume that $\delta n^{5/4} \geq \max\{e^{c_1}, 2\}n$ holds. Then, by Lemmas \ref{lemma:fai_est}, \ref{lemma:kai_asymp_large}, and \ref{lemma:ka1_asymp_large}, we have
    \begin{align*}
        \mL_{(\delta n^{5/4}, \infty)}
        = e^{\cO(n^{-1/4})} \frac{\log^{n+1}(1/t)}{n!} \sum_{a > \delta n^{5/4}} t^a a^n \exp\left\{ \frac{\parti_1^3}{12a^2} - \frac{3\parti_1^5}{160a^4} \right\}
    \end{align*}
    Next, we approximate the sum in the right-hand side by its integral analogue. If $x \in \bR$ and $a > \delta n^{5/4}$ are such that $|x - a| \leq 1$, then
    \begin{itemize}
        \item $t^x = t^{a}e^{\cO(\log t)} = t^a e^{\cO(n^{-1/4})}$,
        \item $x^n = a^n e^{n\log(x/a)} = a^n e^{\cO(n/a)} = a^n e^{\cO(n^{-1/4})}$, \text{ and}
        \item for each $k \geq 0$ given, $\frac{\parti_1^{k+1}}{x^k} = \frac{\parti_1^{k+1}}{a^k}\left(1 + \cO\left(\frac{1}{a}\right)\right)^k = \frac{\parti_1^{k+1}}{a^k} + \cO\left(\frac{\parti_1}{a}\right)^{k+1} = \frac{\parti_1^{k+1}}{a^k} + \cO(n^{-1/4})$. The implicit error bound now depends on $k$ as well. However, it will be used only for $k = 2$ and $k =4$, and so, this causes no harm for our objective of retaining error bounds depending only on $s$ and $\delta$.
    \end{itemize}
    This allows us to approximate the sum by its integral analogue at the expense of the relative error~$e^{\cO(n^{-1/4})}$, yielding
    \begin{align}
        \label{eq:integral_to_est}
        \mL_{(\delta n^{5/4}, \infty)}
        = e^{\cO(n^{-1/4})} \frac{\log^{n+1}(1/t)}{n!} \mJ, \qquad \text{where }
        \mJ = \int_{\delta n^{5/4}}^{\infty} t^x x^n \exp\left\{ \frac{\parti_1^3}{12x^2} - \frac{3\parti_1^5}{160x^4} \right\} \, \mathrm{d}x
    \end{align}
    So it remains to estimate $\mJ$. To this end, we substitute $x = \frac{n}{\log (1/t)} \left( 1 + \frac{w}{\sqrt{n}} \right)$. For the sake of brevity, we also write $c_5 = c_5(n) = 1-\delta n^{1/4}\log(1/t)$. Although $c_5$ depends on $n$ and $s$, the choice of $\delta$ and \eqref{eq:delta} tell us that $c_5$ is uniformly away from $0$ and $1$, which will be sufficient for our purpose. Then,
    \begin{align*}
        \mJ
        &= \int_{-c_5\sqrt{n}}^{\infty} \exp\Bigg\{ \left( n \log \left( \frac{n}{\log(1/t)} \right) - n \right) + \left( n \log \left( 1 + \frac{w}{\sqrt{n}} \right) - \sqrt{n}w \right) \\
        &\hspace{8em} + \frac{\alpha_1^3 n \log^2(1/t)}{12\left(1+(w/\sqrt{n})\right)^2} - \frac{3\alpha_1^5 n \log^4(1/t)}{160\left( 1 + (w/\sqrt{n}) \right)^4} \Bigg\} \frac{\sqrt{n}}{\log(1/t)} \, \mathrm{d}w.
    \end{align*}
    The first two grouped terms in the exponent of the integrand are easily controlled, as they originated from the `unperturbed term' $t^x x^n$. So, it suffices to study the effect of the `perturbation terms'. Taking advantage of the explicit formula of the perturbation term, one may expand
    \begin{align*}
        \frac{\alpha_1^3 n \log^2(1/t)}{12\left(1+(w/\sqrt{n})\right)^2}
        &= \frac{\alpha_1^3 n \log^2(1/t)}{12} \left( 1 - \frac{(w/\sqrt{n})\left(2+(w/\sqrt{n})\right)}{\left(1+(w/\sqrt{n})\right)^2} \right)
    \end{align*}
    Plugging this back in, the integral takes the form
    \begin{align*}
        \mJ
        = \frac{n^{n+1/2}e^{-n}}{\log^{n+1} (1/t)} \exp\left\{ \frac{\alpha_1^3}{12} n \log^2(1/t) \right\} \int_{-c_5\sqrt{n}}^{\infty} f_n(w) e^{g_n(w)} \, \mathrm{d}w,
    \end{align*}
    where $f_n$ is as in Lemma \ref{lemma:approx_gaussian} and $g_n$ is defined by
    \begin{align*}
        g_n(w) = - \frac{\alpha_1^3 \sqrt{n}\log^2(1/t) w(2 + (w/\sqrt{n}))}{12(1 + (w/\sqrt{n}))^2} - \frac{3\alpha_1^5 n \log^4(1/t)}{160\left( 1 + (w/\sqrt{n}) \right)^4}.
    \end{align*}
    As mentioned before, $c_5$ is uniformly away from $1$, meaning that $\sup_{n \geq 1} c_5(n) < 1$ holds. Then $g_n(w) \leq 0$ for $w \geq 0$ and $g_n(w) \leq -c_6 w$ for $w \in [-c_5\sqrt{n}, 0]$, where $c_6 > 0$ is a constant depending only on $s$. Now using the tail estimates in Lemma \ref{lemma:approx_gaussian}, we can check that
    \begin{align*}
        \int\limits_{\substack{w \geq -c_4\sqrt{n} \\ |w|\geq \log n} } f_n(w) e^{g_n(w)} \, \mathrm{d}w
        = \cO\left(n^{-1/4}\right).
    \end{align*}
    Moreover, if $|w| \leq \log n$, then using $\log(1/t) = \frac{2\sqrt{s}}{n^{1/4}} + \frac{s^{3/2}}{6n^{3/4}} + \cO\left(n^{-5/4}\right)$,
    \begin{align*}
        f_n(w) = -\frac{w^2}{2} + \cO\left(\frac{\log^3 n}{\sqrt{n}}\right), \qquad
        g_n(w) = - \frac{\alpha_1^3}{3} s w - \frac{3\alpha_1^5}{10} s^2 + \cO\left(\frac{\log n}{\sqrt{n}} \right).
    \end{align*}
    Plugging this back to $\mL_{(\delta n^{5/4}, \infty)}$ and utilizing Stirling's formula,
    \begin{align*}
        \mL_{(\delta n^{5/4}, \infty)}
        &= \frac{1}{\sqrt{2\pi}} \exp\left\{ \frac{\alpha_1^3}{3} s\sqrt{n} + \left(\frac{\alpha_1^3}{18} - \frac{3\alpha_1^5}{10}\right) s^2 + \cO(n^{-1/4}) \right\} \\
        &\hspace{4em} \times \left( \int_{|w|\leq\log n} \exp\left\{ -\frac{w^2}{2}- \frac{\alpha_1^3}{3} s w \right\} \, \mathrm{d}w + \cO\left(n^{-1/4}\right) \right) \\
        &= \exp\left\{ \frac{\alpha_1^3}{3}s\sqrt{n} + \left( \frac{\alpha_1^3}{18} - \frac{3\alpha_1^5}{10} + \frac{2\alpha_1^6}{9} \right)s^2 + \cO\left(n^{-1/4}\right) \right\}
    \end{align*}
    as required.
\end{proof}

With all the ingredients ready, we immediately obtain the proof of Theorem \ref{maintechthm}.

\begin{proof}[Proof of Theorem \ref{maintechthm}]
    In the proof of Theorem \ref{thm:clt_peaks_Sn}, we checked that
    \begin{align*}
        \left( \frac{2(1-t)}{(1+t)\log(1/t)} \right)^{n+1} = \exp\left\{ -\frac{s}{3}\sqrt{n} + \frac{1}{45}s^2 + \cO(n^{-1/4}) \right\}.
    \end{align*}
    Moreover, if we fix $\delta \in (0, \delta_0)$, by Lemma \ref{lemma:small_range}, we can choose $\rho \in (0, 1)$, independent of $n$ and $\lambda$, so that $\mL_{[1,\delta n^{5/4}]} = \cO(\rho^{n})$. Also, if $n$ is sufficiently large so that $\delta n^{5/4} \geq \max\{e^{c_1}, 2\}n$, Lemma \ref{lemma:large_range} gives a uniform estimate on $\mL_{(\delta n^{5/4}, \infty)}$. Finally, if $\pi$ is chosen uniformly at random from $\cC_{\lambda}$, then
    \begin{align*}
        \E\left[ e^{-s p(\pi) / \sqrt{n}} \right]
        = e^{s/\sqrt{n}} \left( \frac{2(1-t)}{(1+t)\log(1/t)} \right)^{n+1} \mL_{[1, \infty)}.
    \end{align*}
    Plugging in all the estimates and taking advantage of the fact that $\mL_{[1,\delta n^{5/4}]} = \cO(\rho^n)$ can be absorbed into the relative error $\cO(n^{-1/4})$, we have
    \begin{align*}
        \E\left[ e^{-s p(\pi) / \sqrt{n}} \right]
        = \exp\left\{ \left( -\frac{s}{3}\sqrt{n} + \frac{1}{45}s^2 \right) + \left( \frac{\alpha_1^3}{3} s\sqrt{n} + \left( \frac{\alpha_1^3}{18} - \frac{3\alpha_1^5}{10} + \frac{2\alpha_1^6}{9} \right) s^2 \right) + \cO\left(n^{-1/4}\right) \right\}.
    \end{align*}
    This provides the desired bound for the term $E_{\lambda,s}$ appearing in the statement of Theorem \ref{maintechthm}, completing the proof.
\end{proof}

\section*{Acknowledgement}

Fulman was supported by Simons Foundation Grant 400528.

\end{document}